\documentclass[12pt]{amsart}
\usepackage{amssymb,amsmath,mathtools}
\usepackage{color}
\usepackage[shortlabels]{enumitem}
\usepackage[backref=page]{hyperref}
\usepackage{cleveref}
\usepackage{url}
\renewcommand*{\backref}[1]{}
\renewcommand*{\backrefalt}[4]{%
	\ifcase #1 (Not cited).%
	\or        (Cited on page~#2).%
	\else      (Cited on pages~#2).%
	\fi}

\setlength{\textwidth}{6in}
\setlength{\textheight}{8.5in}
\setlength{\oddsidemargin}{0in}
\setlength{\evensidemargin}{0in}
\setlength{\topmargin}{0.25in}


\theoremstyle{plain}

\newtheorem{theorem}{Theorem}[section]
\newtheorem{theoremintro}{Theorem}

\newtheorem{lemma}[theorem]{Lemma}
\newtheorem{proposition}[theorem]{Proposition}

\newtheorem{corollary}[theorem]{Corollary}

\theoremstyle{definition}
\newtheorem{definition}[theorem]{Definition}
\newtheorem{remark}[theorem]{Remark}

\theoremstyle{remark}
\newtheorem{example}[theorem]{Example}

\numberwithin{equation}{section}
\numberwithin{figure}{section}
\numberwithin{table}{figure}

\usepackage{todonotes}

\newcommand*{\rom}[1]{\expandafter\@slowromancap\romannumeral #1@}

\newcommand{\pt}[1]{\left({#1}\right)}
\newcommand{\pq}[1]{\left[{#1}\right]}

\newcommand{\ps}[2]{\left\langle{#1},{#2}\right\rangle}
\newcommand{\rest}[2]{\left.{#1}\right|_{#2}}

\newcommand{\pg}[1]{\left\{{#1}\right\}}
\newcommand{\abs}[1]{\left|{#1}\right|}

\newcommand{\alb}[1]{\,\alpha^{\overline{#1}}}
\newcommand{\al}[2]{\,\alpha^{#1\bar{#2}}\,}
\newcommand{\aldue}[3]{\,\alpha^{#1\bar{#2}\bar{#3}}\,}

\newcommand{\Z}{\mathbb{Z}}

\newcommand{\R}{\mathbb{R}}

\newcommand{\C}{\mathbb{C}}

\newcommand{\hr}{Hodge-Riemann }
\newcommand{\hrt}{Hodge-Riemann type}

\newcommand{\del}{\partial}
\newcommand{\delbar}{\bar\partial}

\title{On the existence of  balanced metrics  of Hodge-Riemann type}
\author{Anna Fino}
\address[Anna Fino]{Dipartimento di Matematica ``G. Peano'', Universit\`{a} degli studi di Torino \\
Via Carlo Alberto 10\\
10123 Torino, Italy\\
\& Department of Mathematics and Statistics, Florida International University\\
Miami, FL 33199, United States}
\email{annamaria.fino@unito.it, afino@fiu.edu}

\author{Asia Mainenti}
\address[Asia Mainenti]{“Simion Stoilow” Institute of Mathematics of the Romanian Academy\\
 Calea Grivi\c{t}ei 21, Bucharest, Romania}
 \email{asia.mainenti@imar.ro}

\keywords{balanced, Hodge-Riemann, nilmanifold, solvmanifold}
\subjclass[2020]{Primary: 
53C55; 
53B35, 
Secondary: 
22E25, 
22E60 
}

\date{\today}

\begin{document}

\begin{abstract}
In the paper we study the existence of balanced metrics of Hodge-Riemann type on non-K\"ahler complex manifolds.
We  first find some  general obstructions, for instance that a Hodge-Riemann balanced manifold of complex dimension $n$ has to be $(n - 2)$-K\"ahler.
Then, we focus on the case of compact quotients of Lie groups by lattices, endowed with an invariant  complex structure.
In particular, we  prove  non existence results on non-K\"ahler complex parallelizable manifolds and some classes of solvmanifolds, and we show that the only nilmanifolds admitting invariant structures of this type are tori.
Finally, we construct the first non-K\"ahler example of a Hodge-Riemann balanced structure, on a non-compact complex manifold obtained as the product of the Iwasawa manifold by $\C$.
\end{abstract}

\maketitle

\section{Introduction}

The non-abelian Hodge correspondence for K\"ahler manifolds  was  recently extended  to a larger class of hermitian metrics on complex manifolds,  called balanced of Hodge-Riemann type.
In particular, in  \cite{cw}, the authors prove that under this assumption, 
on a compact complex manifold, there is a $1$-$1$ correspondence between  semisimple flat bundles and  isomorphism classes of $F$-polystable Higgs bundles satisfying further assumptions, where $F$ is the fundamental form of a Hodge-Riemann balanced metric.

We recall that a  Hermitian  metric $g$ on a complex manifold $(X, J)$ is called \textit{balanced} if its fundamental form $F$ satisfies the condition $d
F^{n -1} =0$, where $n$ is the complex dimension of $X$. A  balanced metric $g$ is  said of \textit{Hodge-Riemann type}  {\cite[Def. 2.7]{cw}} if $F^{n -1}$ 
 can be written as 
        $$
        \frac{F^{n-1}}{(n-1)!}=\omega\wedge\Omega,
        $$
with   $\omega$  a positive definite  $(1,1)$-form  and  $\Omega$  a  closed $(n-2,n-2)$-form  such that $\Omega$ is of Hodge Riemann type with respect to $\omega$ for degrees $(p,q)$,  with $p + q =2$.
This is a generalization of the Hodge-Riemann bilinear relations, which are known to hold on K\"ahler manifolds \cite{GriffithHarris},  and  have been extended to more general results (see, for instance, \cite{timorin,dinh,RT}).

As mentioned above, what was proved in \cite{cw} gives restrictions to non-K\"ahler compact complex manifolds admitting 
Hodge-Riemann balanced structures.
However, the only known examples of Hodge-Riemann balanced metrics, even non-K\"ahler ones, arise in the K\"ahler setting.
For this reason, the main topic that interests this paper is the study of such structures on non-K\"ahler manifolds.

In  the first part of the paper we show that the existence of a balanced metric of Hodge-Riemann type on a complex manifold $(X,J)$ of complex dimension $n$  forces    the  manifold to be  $(n-2)$-K\"ahler. 
For $1\le p\le n$, a $p$-K\"ahler structure on $(X,J)$ is a closed transverse $(p,p)$-form \cite{AA}, coinciding with a K\"ahler metric, for $p=1$,  the $(n-1)$ power of balanced metric, for $p=n-1$ and any volume form, for $p=n$.
More precisely,  we prove  the following.

\smallskip

\begin{theoremintro}\label{thmpkahler}   A Hodge-Riemann balanced manifold $X$ of complex dimension $n$  admits a  $(n-2)$-K\"ahler  structure.
In particular,    if  $n =3$,   $X$  has to be  K\"ahler. \end{theoremintro}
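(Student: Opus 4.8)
The plan is to take the form $\Omega$ itself as the candidate $(n-2)$-K\"ahler structure. By hypothesis $\Omega$ is a globally defined, $d$-closed form of bidegree $(n-2,n-2)$, so the only property left to verify is transversality, which is a pointwise (linear-algebra) condition. Thus I would fix a point $x\in X$ and work on the complex vector space $T_xX\cong\C^n$, with $\omega$ a positive $(1,1)$-form and $\Omega$ a real $(n-2,n-2)$-form on it.

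Next I would recall the duality description of transversality: a real $(n-2,n-2)$-form is transverse precisely when $\Omega\wedge\sigma>0$ (as a positive multiple of the orientation) for every nonzero strongly positive $(2,2)$-form $\sigma$. Since the strongly positive cone in bidegree $(2,2)$ is generated by its extremal rays, it suffices to test $\sigma$ of the form $i\alpha\wedge\bar\alpha\wedge i\beta\wedge\bar\beta$ for $(1,0)$-forms $\alpha,\beta$; a short computation rewrites each such generator as $\gamma\wedge\bar\gamma$, with $\gamma=\alpha\wedge\beta$ a decomposable $(2,0)$-form. Hence transversality of $\Omega$ is equivalent to the inequality $\gamma\wedge\bar\gamma\wedge\Omega>0$ for every nonzero decomposable $(2,0)$-form $\gamma$.

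The crux is then to extract exactly this inequality from the Hodge-Riemann hypothesis. The key observation is that every $(2,0)$-form is automatically $\omega$-primitive, for any positive $(1,1)$-form $\omega$: indeed $\Lambda_\omega$ lowers bidegree by $(1,1)$, so it sends a $(2,0)$-form into bidegree $(1,-1)=0$. Therefore the assumption that $\Omega$ is of Hodge-Riemann type with respect to $\omega$ in the degrees $(p,q)$ with $p+q=2$, specialized to $(p,q)=(2,0)$, says exactly that the Hermitian form $\gamma\mapsto i^{p-q}(-1)^{k(k+1)/2}\,\gamma\wedge\bar\gamma\wedge\Omega$ (with $k=p+q=2$) is positive definite on all $(2,0)$-forms. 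I would check that in this degree the normalizing constant is $i^{2}(-1)^{3}=1$, so the Hodge-Riemann quantity is literally $\gamma\wedge\bar\gamma\wedge\Omega$; applying positivity to decomposable $\gamma$ yields the inequality of the previous paragraph and hence the transversality of $\Omega$. I expect the only delicate point to be this sign and normalization bookkeeping --- matching the Hodge-Riemann convention of \cite[Def.~2.7]{cw} to the orientation convention defining transverse forms --- while the reduction to $(2,0)$-forms is the conceptual heart.

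Finally, $\Omega$ being a $d$-closed transverse $(n-2,n-2)$-form is by definition an $(n-2)$-K\"ahler structure, which proves the first assertion. For $n=3$ we have $n-2=1$, and a $1$-K\"ahler structure is a $d$-closed positive $(1,1)$-form, i.e.\ a K\"ahler metric; thus $X$ is K\"ahler, giving the second assertion.
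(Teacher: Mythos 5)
Your proof is correct and follows essentially the same route as the paper: the paper's Proposition~3.1 (via Remark~2.2) identifies the Hodge-Riemann condition in the degrees with $q=0$ with positive definiteness of $\Omega$ --- exactly your computation that $Q^{2,0}_\Omega(\gamma,\gamma)$ is, up to the sign constant $i^{2}(-1)=1$, the pairing $\star\left(\gamma\wedge\bar\gamma\wedge\Omega\right)$ --- and then concludes that the closed positive definite (hence transverse) form $\Omega$ is an $(n-2)$-K\"ahler structure, with the $n=3$ case following since $1$-K\"ahler means K\"ahler. The only cosmetic difference is that you verify transversality directly on decomposable $(2,0)$-forms, whereas the paper records the stronger positive definiteness first; note also that in the paper's Definition~2.3 the primitivity of all $(2,0)$-forms is immediate for bidegree reasons ($\alpha\wedge\omega\wedge\Omega\in\Lambda^{n+1,n-1}=0$), so your Lefschetz-operator justification, while fine, is not even needed.
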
 

\smallskip

The study of  $(n -2)$-K\"ahler structures allows us to find more obstructions  to the existence of  Hodge-Riemann balanced  structures on  non-K\"ahler manifolds.

Using that $(X, J)$ has to be $(n-2)$-K\"ahler,   by  our previous result \cite[Thm. 3.7]{fm},  we  can exclude the existence of Hodge-Riemann balanced metrics  on nilmanifolds  of complex dimension $n =4$, endowed with an invariant complex structure, namely a complex structure induced by one at the Lie algebra level.
However, if  $n >4$, no general result is known about the existence of $(n -2)$-K\"ahler structures on nilmanifolds, and the only known examples of such structures are on odd-dimensional complex parallelizable nilmanifolds \cite{AB}.

On the other hand, balanced nilmanifolds have been of wide interest. 
In complex dimension $3$ these have been classified \cite{Ugarte}, and further studied in \cite{uv}.
In complex dimension higher than $3$, only a partial classification is known, in the case of $1$-dimensional center and complex dimension $4$, in \cite{luvFrolicher}.

Motivated by this, we consider the  following question.

\smallskip

\noindent
\textbf{Question 1.}
Let $ (X, J)$ be a nilmanifold endowed with an invariant complex structure.
Does $(X,J)$ admit an invariant balanced metric of Hodge-Riemann type?

\smallskip

Note that if the balanced Hodge-Riemann structure is invariant, the problem can be stated in terms of nilpotent Lie algebras.
As a consequence, in Theorem \ref{csnilpNO}, we prove that Question 1 can only be true if $X$ is a complex torus, for every complex dimension $n\ge 3$.
The problem remains open in the case of non invariant balanced structures, even when the complex structure is invariant.

Question 1 can be  extended to the more general  case of  a compact quotient $\Gamma \backslash G$  of a simply connected Lie group $G$  by a  discrete subgroup $\Gamma$,  endowed with an invariant complex structure $J$. 
We consider in particular the cases where $J$ is either bi-invariant or  abelian, proving in both instances that   if $\Gamma \backslash G$  admits an invariant balanced metric of Hodge-Riemann type, then  $(\Gamma \backslash G, J)$ has to be K\"ahler. 
Furthermore, the argument used in the case of bi-invariant complex structures allows to extend the result to non invariant Hodge-Riemann balanced metrics.

In the case when  $\Gamma \backslash G$ is a solvmanifold, i.e. if the Lie algebra  $\mathfrak g$ of $G$  is solvable,  the existence of  balanced structures was extensively studied, for instance   if $G$ is  almost abelian \cite{fp23}, if $\mathfrak g$  has a codimension two  abelian ideal \cite{CaoZheng,GuoZheng}, in the $2$-step solvable case \cite{freibSwann}, in the case of abelian complex structures \cite{AndradaV} and on almost nilpotent Lie algebras \cite{fp22,FParxiv}.

By using \cite[Thm. 4.2]{fm},  we   can  show that  if an   almost  abelian solvmanifold  with an invariant complex structure is Hodge-Riemann  balanced, then it  has to be  K\"ahler. 
In Section \ref{secSolv}, we extend this result to solvmanifolds  with an invariant complex structure  $J$ such that the associated solvable Lie algebra  has an abelian $J$-invariant ideal of  real codimension  $2$.

To sum up, for a compact quotient $X$  of a Lie group $G$ by a lattice endowed with an invariant complex structure  $J$,  we prove the following.

\smallskip
  \begin{theoremintro} \label{ThmIntro}
   Let  $X$ be  the compact quotient of a Lie group $G$ by a lattice endowed with an invariant complex structure $J$.
   The existence of a  Hodge-Riemann balanced  structure $(F, \omega, \Omega) $ implies K\"ahlerianity  in the following cases:
    \begin{enumerate}[label=(\roman*)]    
        \item $G$ is a complex Lie group, i.e. $X$ is a complex parallelizable manifold;
        \item  $X$ is an almost abelian solvmanifold;
        \item\label{abideal} the Lie algebra $\mathfrak g$ of $G$ has a $J$-invariant abelian ideal of  real codimension two.
    \end{enumerate}
    If  we assume in addition that  the balanced metric is invariant, $(X,J)$ still has to be K\"ahler in the following cases:
    \begin{enumerate}[label=(\roman*),resume]
        \item $X$ is a nilmanifold;
        \item the complex structure $J$ is abelian;
        \item the commutator ideal $[\mathfrak g,\mathfrak g]$ is not $J$-invariant.
    \end{enumerate} \end{theoremintro}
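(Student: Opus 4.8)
The plan is to treat the six cases separately, with Theorem~\ref{thmpkahler} as the common starting point: a Hodge-Riemann balanced structure $(F,\omega,\Omega)$ produces the closed transverse $(n-2,n-2)$-form $\Omega$ coming from $\frac{F^{n-1}}{(n-1)!}=\omega\wedge\Omega$, so $X$ is $(n-2)$-K\"ahler. Each case then reduces to showing that, on the given class of quotients, an $(n-2)$-K\"ahler structure (sometimes together with the full balanced and Hodge-Riemann data) is incompatible with non-K\"ahlerianity. The natural split is that cases (i)--(iii) can be settled using the $(n-2)$-K\"ahler form directly at the level of $X$, whereas (iv)--(vi) need the finer invariant datum $(F,\omega,\Omega)$ on $\mathfrak g$, which is why invariance of the metric is assumed there.

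For case (i), a complex parallelizable $X=\Gamma\backslash G$ has bi-invariant $J$, and its $(1,0)$-forms obey holomorphic structure equations $d\alpha^{i}=-\tfrac12\sum_{j,k}c^{i}_{jk}\,\alpha^{j}\wedge\alpha^{k}$. I would combine these with the Hodge-Riemann positivity in degree $(1,1)$: evaluating the resulting quadratic inequality and integrating it over $X$ yields a cohomological pairing forcing all $c^{i}_{jk}$ to vanish, so $G$ is abelian and $X$ is a complex torus; the integration step is insensitive to invariance of the metric, which is exactly why (i) needs no invariance hypothesis. For case (ii) I would quote \cite[Thm.~4.2]{fm}, which rules out non-K\"ahler $(n-2)$-K\"ahler structures on almost abelian solvmanifolds, so the form $\Omega$ of Theorem~\ref{thmpkahler} already forces K\"ahlerianity. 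Case (iii), carried out in Section~\ref{secSolv}, is the genuinely new solvable statement generalising (ii): writing the structure equations adapted to the codimension-two $J$-invariant abelian ideal $\mathfrak a$, I would expand $d\Omega=0$ and the balanced equation $dF^{n-1}=0$ and show that the components obstructing closedness of a positive $(1,1)$-form must vanish, leaving a K\"ahler form.

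For the last three cases the balanced metric is invariant, so the datum $(F,\omega,\Omega)$ descends to $\mathfrak g^{*}$ and the problem becomes Lie-theoretic. Case (iv) is Theorem~\ref{csnilpNO}: on a nilpotent $\mathfrak g$ one uses the Nomizu description of invariant cohomology together with the balanced and Hodge-Riemann equations to force every structure constant to vanish, hence a torus; here the Hodge-Riemann signature condition is what upgrades the plain $(n-2)$-K\"ahler obstruction, which alone only settles $n=4$, to all $n\ge 3$. In case (v), abelianness of $J$ means that $d$ sends $(1,0)$-forms into $(1,1)$-forms; substituting this into the balanced and Hodge-Riemann relations, I would show the surviving terms force $d\omega=0$, so the positive form $\omega$ is itself K\"ahler. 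In case (vi) I would instead prove that an invariant Hodge-Riemann balanced structure forces $[\mathfrak g,\mathfrak g]$ to be $J$-invariant; since the hypothesis of (vi) is that it is not, no such structure exists and the implication \virg{Hodge-Riemann balanced $\Rightarrow$ K\"ahler} holds vacuously.

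The step I expect to be hardest is case (iii): unlike (ii) there is no classification to cite, and unlike (iv)--(vi) one cannot reduce to a finite computation on $\mathfrak g$ because the metric is not assumed invariant. The delicate point is to show that the possibly non-invariant $(n-2)$-K\"ahler form produced by Theorem~\ref{thmpkahler} still forces the obstruction to K\"ahlerianity to vanish; this requires either a symmetrization valid for this class of solvmanifolds or a Stokes-type integration over $X$ in the spirit of case (i). Pinning down precisely which components of the structure equations are killed by $d\Omega=0$, while checking that transversality of $\Omega$ is preserved throughout, is where the real work lies.
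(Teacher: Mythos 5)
Your skeleton is right where it overlaps with the paper: case (ii) is indeed just \cite[Thm.~4.2]{fm}; your contrapositive for case (vi) is exactly Corollary \ref{hrtJg^1}; and you correctly single out case (iii) as the hard one and correctly guess that symmetrization is what lets the paper drop invariance there (symmetrization preserves closedness and transversality, and the obstructions used --- Proposition \ref{obstructpK} and its refinements --- only pair the possibly non-invariant $\Omega$ against invariant test forms under an integral). What you miss is the paper's unifying engine: cases (i), (iv), (v), (vi) are all reduced to a single statement, Lemma \ref{lemmaobst} --- the existence of a $(1,0)$-form $\alpha$ with $\del\delbar\alpha=0$ and exactly one of $\del\alpha$, $\delbar\alpha$ nonzero is an obstruction --- fed through Proposition \ref{ddbarnonclosed}, rather than by separate per-case computations.

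Two of your per-case mechanisms have genuine gaps. First, in case (i) you propose to use ``Hodge-Riemann positivity in degree $(1,1)$'' and assert the integration is insensitive to invariance of the metric; that combination fails. The degree-$(1,1)$ condition only controls $Q_\Omega$ on \emph{$F$-primitive} forms, and primitivity of an invariant test form with respect to a non-invariant $F$ is exactly what you cannot guarantee (this is the content of the remark following Proposition \ref{ddbarnonclosed}). The reason (i) needs no invariance in the paper is that a non-closed holomorphic $1$-form $\omega^j$ has $\delbar\omega^j=0$, so $\gamma=\del\omega^j\wedge\bar\omega^j$ gives $(d\gamma)^{2,2}=-\del\omega^j\wedge\overline{\del\omega^j}$, an obstruction of degree-$(2,0)$ type: Corollary \ref{corpd} uses only positive definiteness of $\Omega$, with no primitivity input. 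Second, in case (v) your plan to show ``the surviving terms force $d\omega=0$'' is not a workable mechanism and is not what is true: nothing forces the given $\omega$ to be closed. The actual argument is non-existence --- abelian $J$ means $d\alpha\in\Lambda^{1,1}$, so any non-closed $(1,0)$-form has $\del\alpha=0$, $\delbar\alpha\neq0$, and (here is where invariance of $F$ enters, via Proposition \ref{andradav}) $\delbar\alpha$ is primitive, landing in case \ref{lobst2} of Lemma \ref{lemmaobst}; hence $\mathfrak g$ must be abelian. Finally, in case (iii) your claim that $d\Omega=0$ kills all obstructing components ``leaving a K\"ahler form'' is too strong: in the paper, closedness of $\Omega$ restricted to the ideal $\mathfrak a$, together with Lefschetz injectivity and the Chern-torsion computation of \cite{GuoZheng}, only pins the structure equations down to the family \eqref{cseabid}, after which one must split into the genuinely K\"ahler branch (Remark \ref{abidk}) and the branch $\lambda_k=0$, $v_k\neq0$, which is excluded not by exhibiting a K\"ahler form but by explicit obstruction $3$-forms via Proposition \ref{obstructpK} (Theorem \ref{ThmAbIdeal}).
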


\smallskip

In contrast with the many obstructions arising in the compact case, we will conclude giving the first non-K\"ahler example  of Hodge-Riemann balanced structure, on  a non-compact  manifold.
More precisely, we prove the following.
\smallskip
\begin{theoremintro}\label{thmintroIC}
    The product $M={\mathcal{I}\times \C}$ of the Iwasawa manifold $\mathcal{I}$ by $\C$ admits a non-invariant balanced metric of Hodge-Riemann type.
\end{theoremintro}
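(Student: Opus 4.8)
The plan is to exhibit the triple $(F,\omega,\Omega)$ completely explicitly on $M=\mathcal I\times\C$, to check that $F$ is balanced by a direct computation, and then to reduce the Hodge--Riemann condition of \cite[Def.~2.7]{cw} to a pointwise signature statement on $\omega$-primitive forms of total degree two. The guiding principle is that the obstructions of Theorem~\ref{ThmIntro} are cohomological and rely on compactness, so that the extra, non-compact $\C$-direction should create exactly the room needed to realise a non-K\"ahler example. First I fix a coframe: let $\omega^1,\omega^2,\omega^3$ be a left-invariant holomorphic coframe of the complex Heisenberg group descending to $\mathcal I$, normalised by
\[
 d\omega^1=d\omega^2=0,\qquad d\omega^3=\omega^1\wedge\omega^2,
\]
and let $\omega^4=dw$ be the holomorphic $1$-form of the $\C$-factor, so $d\omega^4=0$ on $M$. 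Writing $\omega^{i\bar\jmath}=\omega^i\wedge\bar\omega^\jmath$, the only non-closed generators are $\omega^3,\bar\omega^3$, with $d\omega^{3\bar3}=\omega^1\wedge\omega^2\wedge\bar\omega^3-\omega^3\wedge\bar\omega^1\wedge\bar\omega^2$.

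Set $F_{\mathcal I}=\tfrac{i}{2}\!\left(\omega^{1\bar1}+\omega^{2\bar2}+\omega^{3\bar3}\right)$ and $F_{\C}=\tfrac{i}{2}\omega^{4\bar4}$. Since $F_{\mathcal I}$ is the standard balanced form of the Iwasawa manifold one has $dF_{\mathcal I}^{2}=0$, and as $F_{\C}$ is closed with $F_{\C}^2=0$, the diagonal form $F_0=F_{\mathcal I}+F_{\C}$ already satisfies $F_0^3=F_{\mathcal I}^3+3F_{\mathcal I}^2\wedge F_{\C}$ and hence $dF_0^{3}=0$. This also produces the natural candidate data for the decomposition: using $\tfrac13F_{\mathcal I}\wedge\tfrac12F_{\mathcal I}^2=\tfrac12 F_{\mathcal I}^3/3$ one checks the identity
\[
 \frac{F_0^{3}}{3!}=\Big(\tfrac13 F_{\mathcal I}+F_{\C}\Big)\wedge\tfrac12 F_{\mathcal I}^{2},
\]
so I would take $\omega:=\tfrac13 F_{\mathcal I}+F_{\C}$ (positive definite) and, as the closed $(2,2)$-factor, a modification of $\Omega_0:=\tfrac12 F_{\mathcal I}^{2}$. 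Note that $\Omega_0$ is closed and transverse, so it is precisely the $(n-2)$-K\"ahler structure on $M$ guaranteed by Theorem~\ref{thmpkahler}; moreover $d\omega\wedge\Omega_0=0$ because the non-closed part of $d\omega$ is a multiple of $\omega^1\wedge\omega^2\wedge\bar\omega^3-\omega^3\wedge\bar\omega^1\wedge\bar\omega^2$, which annihilates $F_{\mathcal I}^2$. Thus $\omega\wedge\Omega_0$ is closed, and by Michelsohn's bijectivity of $G\mapsto G^{n-1}$ on positive $(1,1)$-forms the form $F$ reconstructed from $F^{3}=3!\,\omega\wedge\Omega$ is automatically balanced once $\Omega$ is closed with $d\omega\wedge\Omega=0$.

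The Hodge--Riemann step is where the non-invariance enters, and I expect it to be the main obstacle. The condition of \cite[Def.~2.7]{cw} amounts to the definiteness, with the classical sign, of the Hermitian form $\alpha\mapsto c_{p,q}\,\Omega\wedge\alpha\wedge\bar\alpha$ on $\omega$-primitive $(p,q)$-forms with $p+q=2$, where $c_{p,q}=i^{\,p-q}(-1)$ is normalised so that the model $\Omega=\tfrac12\omega^{2}$ recovers the usual relations. A direct computation shows that the invariant candidate $\Omega_0=\tfrac12 F_{\mathcal I}^{2}$ already yields the correct sign on every primitive $(2,0)$-, $(1,1)$- and $(0,2)$-component, but only semidefinitely: its kernel consists exactly of the forms supported on the Iwasawa directions $1,2,3$, for instance $\omega^{1\bar2}$ and the trace-free diagonal $(1,1)$-forms in those directions pair to zero. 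This degeneracy is not an accident: pairing $\omega^{1\bar2}$ with itself picks out the $\omega^{3\bar3}\wedge\omega^{4\bar4}$-component of $\Omega$, and no \emph{invariant} closed $(2,2)$-form can carry such a component, since $d(\omega^{3\bar3}\wedge\omega^{4\bar4})\neq0$ and the requisite correction cannot be chosen invariantly. This is precisely the mechanism behind the K\"ahler obstruction of Theorem~\ref{ThmIntro} in the compact invariant case.

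To obtain a genuine Hodge--Riemann balanced structure I would therefore perturb $\Omega_0$ (and correspondingly $F$ and $\omega$) by non-invariant terms whose coefficients are functions of the $\C$-coordinate $w$, designed so that their exterior derivatives cancel the non-closed contributions coming from $d\omega^3=\omega^1\wedge\omega^2$ against the $w$-derivatives of the coefficients; this cancellation has no invariant analogue on a compact quotient, which is exactly why the example must be non-invariant and lives on the non-compact $M$. The decisive point, and the hardest part of the argument, is to verify that such a correction can be chosen so as simultaneously to (i) keep $\Omega$ closed and $\omega$ positive definite, and (ii) shift the previously null primitive directions into the strictly positive (respectively negative) region, upgrading the semidefinite form above to the definite form required by \cite[Def.~2.7]{cw}. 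Concretely I would diagonalise the Hermitian form $\alpha\mapsto\Omega\wedge\alpha\wedge\bar\alpha$ on each primitive component at a point and track the sign of the $w$-dependent corrections on the null eigenvectors; controlling these corrections while respecting the closedness and positivity constraints is the core computation, after which the reconstruction of $F$ and the balanced condition follow formally from the preceding steps.
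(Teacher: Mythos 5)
There is a genuine gap, and you name it yourself: everything up to the final paragraph is correct stage-setting, but the theorem is an existence statement, and your proposal stops exactly where its content begins. Your diagnosis is sound and matches the paper's: the invariant candidate $\Omega_0=\tfrac12 F_{\mathcal I}^2$ is only semi-definite, its null directions are the forms with no leg along $\C$, the missing $\varphi^{3\bar34\bar4}$-component cannot be supplied by any \emph{invariant} closed $(2,2)$-form (consistent with Theorem \ref{csnilpNO}, since the underlying algebra is nilpotent), so the correction must depend on the coordinate $u$ of the $\C$-factor, with $d$ of the coefficients cancelling the Heisenberg torsion $d\varphi^3=-\varphi^{12}$. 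But ``I would perturb $\Omega_0$ \dots\ and track the sign of the $w$-dependent corrections on the null eigenvectors'' is a research plan, not a proof: you never exhibit the correction, nor prove that one exists satisfying simultaneously closedness of $\Omega$, pointwise definiteness of $Q_\Omega^{2,0}$ and of $Q_\Omega^{1,1}$ on primitives, and $d\omega\wedge\Omega=0$. The paper's proof of Theorem \ref{thmintroIC} consists precisely of this construction and verification: an explicit $\Omega$ with coefficients $e^{U}$, $1+U e^{U}$, $1+U e^{2U}$ ($U=\abs{u}^2$) and cross-terms $u\,e^{U}\varphi^{12\bar3\bar4}+\bar u\,e^{U}\varphi^{34\bar1\bar2}$, paired with the likewise non-invariant $\omega_0= i\pt{\varphi^{1\bar1}+\varphi^{2\bar2}+e^{-U}\varphi^{3\bar3}+\varphi^{4\bar4}}$; positive definiteness is checked by diagonalising $Q^{2,0}_\Omega$ on an adapted basis (e.g.\ $\Psi^1=e^U\pt{\varphi^{12}-\bar u\,\varphi^{34}}$), and the Hodge-Riemann condition on $P^{1,1}$ reduces, via Sylvester's criterion on an explicit $3\times3$ block, to the elementary inequality $e^{2U}(3U-1)+4>0$ for all $U\ge0$; balancedness then follows from $\del\pt{e^{-U}\varphi^{3\bar3}}\wedge\Omega=0$, as you correctly anticipated it would.

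One further caution about the route you sketch: a perturbative ``shift the null eigenvalues'' argument is unlikely to close the gap as stated. The degeneracy of $\Omega_0$ cannot be repaired by a small closed correction, since (as you observe) the needed $\varphi^{3\bar34\bar4}$-component is obstructed in the invariant complex, and any non-invariant closed correction carrying it is forced by the cancellation against $d\varphi^3$ to have $u$-dependent, in fact unbounded, coefficients (note the $e^{U}$ and $U e^{2U}$ growth in the paper's $\Omega$). Definiteness must then be verified globally in $u$, not just near the null eigenvectors at a point --- this is why the paper needs the explicit determinant computation rather than a soft openness argument. Also note that in the paper not only $\Omega$ but the metric factor $\omega$ is non-invariant (the $e^{-U}$ weight on $\varphi^{3\bar3}$ is what makes $\pt{\del\omega_0}\wedge\Omega$ vanish); your plan of perturbing ``$\Omega_0$ (and correspondingly $F$ and $\omega$)'' gestures at this but leaves the coupled system of constraints unresolved. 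In short: correct strategy, correct mechanism, but the decisive computation --- the actual existence proof --- is missing.
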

The organization of the paper is as follows. 
In Section \ref{secHR} we go into the details of the definition of balanced structures of Hodge-Riemann type and fix some notations and remarks that will be useful in what follows.

Section \ref{secpK} is dedicated to show how Hodge-Riemann balanced manifolds can be seen as a particular instance of $p$-K\"ahler manifolds, for $p=n-2$, and consequently inherit obstructions from the theory of the latter.
A straightforward consequence is Theorem \ref{thmpkahler}.
The main result of this section is Lemma \ref{lemmaobst}, an obstruction result in terms of harmonic, i.e. $\del\delbar$-closed forms, that will play the main role in the proof  of Theorem \ref{ThmIntro}. 

In Section \ref{secSolv} we specialize what was noted and proved in the previous sections, to the case of Lie groups.
In particular, we study compact quotients of Lie groups, endowed with invariant complex structures.
In this setting, at the Lie algebra level, we can restate some of the previous results in more simple terms, using that for invariant balanced metrics, exact forms are always primitive.
This will allow us to prove the last three items of Theorem \ref{ThmIntro}.
Later on, as mentioned above, we will prove an analogue of \cite[Thm. 4.2]{fm}, for Lie algebras admitting a $J$-invariant abelian ideal of codimension $2$.
Using this, we will prove \ref{abideal} of  Theorem \ref{ThmIntro}.

To conclude, in Section \ref{section.noncompact}, we prove Theorem \ref{thmintroIC}, by an explicit construction of a Hodge-Riemann balanced structure.
We produce a $(n-2)$-K\"ahler form $\Omega$ that also satisfies the Hodge-Riemann bilinear relations, with respect to a positive definite $(1,1)$-form $\omega_0$.
Both the forms $\omega_0$ and $\Omega_0$ will be invariant, along the Iwasawa manifold, but not along $\C$.
Also, the wedge $\omega_0\wedge\Omega$ will produce a balanced metric, that will not be invariant, accordingly to Theorem \ref{ThmIntro}.

\section{Balanced metrics of Hodge-Riemann type}\label{secHR}

On a complex vector space $V$ of dimension $n$, denote by $\Lambda^{p,q}: =\Lambda^{p,q}V^*$ the space of $(p, q)$-forms over $V$.
We will now review some positivity notions for forms on $V$, which we will then apply to the case of differential forms on complex manifolds $(X,J)$, with $V=T_xX$, for $x\in X$.
In this setting, we will identify Hermitian metrics on  $(X,J)$ with the associated strongly positive $(1, 1)$ forms.

\begin{definition}\label{poscondHRT}
    A $(k,k)$-form $\Omega$  on $V$  is \textit{strongly positive} if 
    $\Omega=i^{{k}^2}\sum_j\psi_j\wedge\bar{\psi}_j$, with $\psi_j\in\Lambda^{k,0}$ decomposable generators of $\Lambda^{k,0}$.

    A $(k,k)$-form $\Omega$  on $V$ is \textit{positive definite} if, for all $\eta\in\Lambda^{n-k,0}$, the $(n,n)$-form 
    $$
    i^{\pt{n-k}^2}\,\Omega\wedge\eta\wedge\bar\eta
    $$
    is a positive multiple of a fixed volume form. 
    A $(k,k)$-form $\Omega$ on a complex vector space $V$ is \textit{transverse} if $i^{\pt{n-k}^2}\,\Omega\wedge\psi\wedge\bar\psi$ is a positive volume form, for all $\psi\in\Lambda^{n-k,0}$ decomposable.
\end{definition}

The wedge product of strongly positive forms is still a strongly positive form, whereas the same is false in general for positive definite forms \cite[Prop. III.1.11]{demailly}.
In fact, the most well-known examples of strongly positive forms are powers of Hermitian metrics.
It is evident that strongly positive forms are positive definite, and positive definiteness implies transversality.
The latter is in some sense dual to strong positivity, and  transverse $(k,k)$-forms are characterized from the property of being volume forms, when restricted to complex subspaces of $V$ of complex dimension $k$.
The following allows for a better understanding of positive definiteness.

\begin{remark}\label{rmkPosDef}
For every $(p,q)$ with $p+q\eqqcolon k<n$, one can associate to each real form $\Omega\in\Lambda^{n-k,n-k}$ a Hermitian form $Q=Q_{\Omega}^{p,q}$ on $\Lambda^{p,q}$, defined by
$$
Q(\alpha,\beta)\coloneqq i^{p-q}\pt{-1}^\frac{k\pt{k-1}}{2}\star\pt{\alpha\wedge\bar{\beta}\wedge\Omega},
$$
where $\star$ is the Hodge star operator associated to a Hermitian metric $F$. 
When $q=0$, we can see that $Q_\Omega^{k,0}$ is positive definite if and only if $\Omega$ is positive definite, in the sense of Definition \ref{poscondHRT}, with respect to the volume form $\frac{F^n}{n!}$, or equivalently to a volume form obtained by  any other Hermitian metric.
\end{remark}

\begin{definition}\label{Hodge-Riemann form}
    A real $(n-p-q,n-p-q)$ form $\Omega$ on $V$ is called a \emph{{Hodge-Riemann form}} with respect to a strongly positive $(1, 1)$ form $\omega$,  for degree $(p,q)$ if
    \begin{itemize}
        \item $Q$ is positive definite on the space of primitive forms 
$$
{{P^{p,q}}}\coloneqq\pg{\alpha\in\Lambda^{p,q}\colon\alpha\wedge\omega\wedge\Omega=0},
$$
        \item $\Lambda^{p,q}$ decomposes $Q$-orthogonally in 
        $$
        \Lambda^{p,q}=\omega\wedge\Lambda^{p-1,q-1}\oplus P^{p,q}.
        $$
    \end{itemize}
\end{definition}

By construction, the second condition is intended to be always satisfied, when $pq=0$.
Moreover, in the remaining cases, a simple computation shows that if $\omega\wedge\Lambda^{p-1,q-1}$ is in direct sum with $P^{p,q}$, then the two spaces are $Q$-orthogonal, so this condition is guaranteed by 
$\omega^2\wedge\Omega\neq0$, .

An example of {Hodge-Riemann form} with respect to a strongly positive $(1, 1)$-form $\omega$, for any degree $(p,q)$ is $\omega^{n-p-q}$ (cf. \cite{voisin}).
More in general, if $\omega_0,\dots,\omega_{n-p-q}$ are strongly positive $(1, 1)$-forms, then $\omega_1\wedge\dots\wedge\omega_{n-p-q}$ is a Hodge-Riemann form with respect to $\omega_0$ (cf. \cite{timorin}).

In the setting of a complex manifold $X$ of complex dimension $n$,  we can give the following definition.

\begin{definition}[{\cite[Def. 2.7]{cw}}]\label{HRtype}
A Hermitian  balanced metric $F$ on a complex manifold $(X,J)$ is said to
be of \textit{Hodge-Riemann type} if the following conditions hold:
\begin{enumerate}[label=(\alph*)]
    \item There exist a Hermitian metric $\omega$ and a real $(n - 2, n - 2)$ form $\Omega$ such that 
        $$
        \frac{F^{n-1}}{(n-1)!}=\omega\wedge\Omega.
        $$
    \item\label{HRpq} At every point $\Omega$ is a Hodge-Riemann form with respect to $\omega$ for all the degrees $(p,q)$ such that $p+q=2$;
    \item $\Omega$ and $\omega\wedge\Omega$ are closed (namely, $\Omega$ is closed and $F$ is balanced).
\end{enumerate}
We will refer to the triple $(F,\omega,\Omega)$ as to a \textit{Hodge-Riemann balanced structure} on $(X,J)$.
\end{definition} 

Note that from the first condition it follows that a $(1,1)$-form $\alpha$ is primitive in the sense defined above if and only if it is primitive in the usual sense, i.e. $F^{n-1}\wedge\alpha=0$, with $F$ balanced.

\begin{example} 
    Let $\omega_0,\dots,\omega_{n-2}\in\Lambda^{1,1}$ be strongly positive forms.
    As mentioned above, $\Omega=\omega_1\wedge\cdots\wedge\omega_{n-2}$ is a strongly positive $(n - 2, n - 2)$-form.
    Moreover (see for example \cite{michelsohn}), there exists a strongly positive $(1,1)$-form $F$ such that
    $$
    \frac{F^{n-1}}{(n-1)!}=
    \omega_0\wedge\omega_1\wedge\cdots\wedge\omega_{n-2},
    $$
    so that if $\omega_0,\dots,\omega_{n-2}$ are closed, $F$ is  Hodge-Riemann balanced.
\end{example}

\section{Relation with $(n -2)$-K\"ahler structures and obstructions}\label{secpK} 

A reformulation of \Cref{HRtype}, focused on one of the three forms defining a Hodge-Riemann balanced structure (\Cref{HRequiv}), emphasizes the relation of the latter with $p$-K\"ahler structures (\Cref{theoremn-2}). This allows for general obstruction results to the existence of Hodge-Riemann balanced structures, such as \Cref{obstructcpd,obstructhrt}, culminating in \Cref{lemmaobst}, a more specialized, but more explicit, criterion for non-existence of such structures on balanced manifolds.

\medskip
We begin by restating Definition \ref{HRtype} in an equivalent way, to emphasize that the Hodge-Riemann conditions are actually positivity conditions on $\Omega$.

\begin{proposition}\label{HRequiv}
A Hermitian balanced metric $F$ on a complex manifold $(X,J)$ is of {Hodge-Riemann type} if and only if
        $$
        \frac{F^{n-1}}{(n-1)!}=\omega\wedge\Omega,
        $$
for some Hermitian metric $\omega$ and a closed $\Omega\in\Lambda^{n-2,n-2}$, such that
\begin{enumerate}[label=(\roman*)]
    \item\label{hrequiv1} $\Omega$ is positive definite;
    \item\label{hrequiv2}  $Q_\Omega$ is positive definite on the space of primitive $(1,1)$-forms $P^{1,1}$.
\end{enumerate}
\end{proposition}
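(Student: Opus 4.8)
The plan is to check the equivalence bidegree by bidegree over the three pairs $(p,q)$ with $p+q=2$, namely $(2,0)$, $(1,1)$ and $(0,2)$, showing that the family of Hodge-Riemann conditions in Definition \ref{HRtype}\ref{HRpq} collapses precisely to the two positivity statements \ref{hrequiv1} and \ref{hrequiv2}. Throughout I keep the defining relation $\frac{F^{n-1}}{(n-1)!}=\omega\wedge\Omega$ and the closedness of $\Omega$ fixed, since these appear verbatim on both sides; only condition \ref{HRpq} needs translating.

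First I would dispose of the extremal bidegrees $(2,0)$ and $(0,2)$. Here $pq=0$, so the orthogonality requirement in Definition \ref{Hodge-Riemann form} is vacuous, and the primitive space is all of $\Lambda^{2,0}$ (resp. $\Lambda^{0,2}$): indeed for $\alpha\in\Lambda^{2,0}$ the product $\alpha\wedge\omega\wedge\Omega$ has bidegree $(n+1,n-1)$ and hence vanishes identically. Thus the Hodge-Riemann condition at $(2,0)$ is exactly that $Q=Q_\Omega^{2,0}$ be positive definite on all of $\Lambda^{2,0}$, which by Remark \ref{rmkPosDef} applied with $k=2$, $q=0$ holds if and only if $\Omega$ is positive definite, i.e. condition \ref{hrequiv1}. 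The bidegree $(0,2)$ contributes nothing new: using the reality of $\Omega$, conjugation identifies $Q_\Omega^{0,2}(\alpha,\alpha)$ with $Q_\Omega^{2,0}(\bar\alpha,\bar\alpha)$, so its positive definiteness is again equivalent to \ref{hrequiv1}.

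It then remains to treat the middle bidegree $(1,1)$. The key observation is that the required $Q$-orthogonal splitting $\Lambda^{1,1}=\omega\wedge\Lambda^{0,0}\oplus P^{1,1}$ is automatic. Wedging the defining relation with $\omega$ gives $\omega^2\wedge\Omega=\omega\wedge\frac{F^{n-1}}{(n-1)!}$, a wedge of strongly positive forms and hence a strictly positive top form; in particular $\omega^2\wedge\Omega\neq0$. As recorded after Definition \ref{Hodge-Riemann form}, this single non-vanishing both forces the direct sum (the functional $\alpha\mapsto\alpha\wedge\omega\wedge\Omega$ is nonzero on $\omega$, so $P^{1,1}$ has codimension one and misses $\omega$) and guarantees the $Q$-orthogonality of the two summands. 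Consequently the Hodge-Riemann condition at $(1,1)$ reduces to positive definiteness of $Q_\Omega$ on $P^{1,1}$, which is exactly \ref{hrequiv2}.

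Putting the three cases together yields that \ref{HRpq} is equivalent to \ref{hrequiv1} and \ref{hrequiv2} jointly, completing the equivalence. I expect the only delicate points to be the bookkeeping in Remark \ref{rmkPosDef} linking positive definiteness of $Q_\Omega^{2,0}$ to genuine positive definiteness of $\Omega$, and the conjugation argument identifying the $(0,2)$ condition with the $(2,0)$ one; the $(1,1)$ analysis, by contrast, is essentially forced by the strong positivity already built into the relation $\frac{F^{n-1}}{(n-1)!}=\omega\wedge\Omega$.
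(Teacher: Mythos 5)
Your proposal is correct and follows essentially the same route as the paper's proof: the $pq=0$ cases are handled via Remark \ref{rmkPosDef}, and the $(1,1)$ case reduces to positivity on $P^{1,1}$ because $\omega^2\wedge\Omega=\omega\wedge\frac{F^{n-1}}{(n-1)!}$ is a wedge of strongly positive forms, hence nonzero, which (as noted after Definition \ref{Hodge-Riemann form}) yields the $Q$-orthogonal splitting. Your additional details --- that $P^{2,0}=\Lambda^{2,0}$ for bidegree reasons and that the $(0,2)$ condition follows from the $(2,0)$ one by conjugation using the reality of $\Omega$ --- merely make explicit what the paper leaves as a ``straightforward consequence'' of Remark \ref{rmkPosDef}.
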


\begin{proof}
{Comparing the statement with \Cref{HRtype}, it is sufficient to} show that condition \ref{HRpq} in Definition \ref{HRtype} is equivalent to \ref{hrequiv1}, for $p=0$, or $q=0$, and to \ref{hrequiv2}, for $p=q=1$.

{Indeed, for $p=0$, respectively $q=0$, Definition \ref{HRtype}.\ref{HRpq} states that $Q_\Omega$ is positive definite on the space $\Lambda^{0,2}$, respectively  $\Lambda^{2,0}$.
    The equivalence between this condition and \ref{hrequiv1} is therefore a straightforward consequence of Remark \ref{rmkPosDef}, where the definition of $Q_\Omega$ is given.
    \\
    On the other hand, for the case $p=q=1$, condition \ref{HRpq} in Definition \ref{HRtype}}
is equivalent to condition \ref{hrequiv2}, together with the $Q_\Omega$-orthogonal splitting $\Lambda^{p,q}=\omega\wedge\Lambda^{p-1,q-1}\oplus P^{p,q}$.
    As mentioned above, {this orthogonality} is equivalent to require
    $$
    0\neq\omega^2\wedge\Omega=\omega\wedge\frac{F^{n-1}}{(n-1)!}.
    $$
    By construction, $\omega$ and $F^{n-1}$ are both strongly positive and we already noted that the wedge of strongly positive forms is a strongly positive form, so in particular is non zero, allowing us to conclude.
\end{proof}

We will see in what follows that many obstructions to the existence of non-K\"ahler Hodge-Riemann balanced manifolds come from the study of closed positive definite forms.
This is a particular instance of a more broad class of structures, $p$-K\"ahler structures \cite{AA}, defined as closed transverse differential forms.

\begin{definition} 
Let $(X, J)$  be a complex manifold  of complex dimension $n$ and $1 \leq p \leq n$.  A \textit{$p$-K\"ahler structure}  on $X$ is   given by  a $d$-closed, pointwise transverse, $(p,p)$-form.
\end{definition}

Clearly, a complex manifold is  always $n$-K\"ahler.
For $p=1$ (respectively $p = n -1$)  we obtain the well-known case of K\"ahler (respectively balanced) metrics. Note that  for $1 < p < n-1,$  $p$-K\"ahler structures  have no metric meaning \cite[Proposition 2.1]{AB}.

\begin{theorem}\label{theoremn-2}
    A Hodge-Riemann balanced manifold of complex dimension $n$ is  $(n-2)$-K\"ahler.
    In particular, Hodge-Riemann balanced manifolds of complex dimension $3$ are K\"ahler.
\end{theorem}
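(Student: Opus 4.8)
The plan is to read the conclusion off directly from the reformulation in Proposition \ref{HRequiv}, combined with the positivity hierarchy recorded in the discussion following Definition \ref{poscondHRT}. First I would apply Proposition \ref{HRequiv}: a Hodge-Riemann balanced metric $F$ yields a Hermitian metric $\omega$ and a form $\Omega\in\Lambda^{n-2,n-2}$ that is closed and positive definite (conditions \ref{hrequiv1}--\ref{hrequiv2} of the proposition). For the present statement the only data I actually need are that $\Omega$ is closed and positive definite; the primitivity condition \ref{hrequiv2} on $P^{1,1}$ plays no role here.

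Next I would invoke the implication noted after Definition \ref{poscondHRT}, that positive definiteness implies transversality. Applied pointwise to $\Omega$, this shows that $\Omega$ is a pointwise transverse $(n-2,n-2)$-form; since it is also $d$-closed, it is by definition an $(n-2)$-K\"ahler structure, which establishes the first assertion. For the case $n=3$ it then remains only to match definitions: here $n-2=1$, and a $1$-K\"ahler structure is a $d$-closed transverse $(1,1)$-form, which is exactly a K\"ahler metric (as recalled right after the definition of $p$-K\"ahler structure). Hence a Hodge-Riemann balanced threefold is K\"ahler.

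Because the substantive work has already been carried out in Proposition \ref{HRequiv} and in setting up the positivity notions, there is essentially no obstacle to overcome: the theorem is a bookkeeping exercise in the definitions. The single point deserving a moment's care is verifying that the positive definiteness of $\Omega$ supplied by Proposition \ref{HRequiv} is the same notion as in Definition \ref{poscondHRT}, so that the chain ``positive definite $\Rightarrow$ transverse'' applies legitimately and delivers the required transversality of $\Omega$.
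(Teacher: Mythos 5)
Your proposal is correct and follows essentially the same route as the paper: the paper's proof likewise observes that the Hodge-Riemann balanced structure $(F,\omega,\Omega)$ supplies a closed positive definite $(n-2,n-2)$-form $\Omega$, which is in particular transverse and hence an $(n-2)$-K\"ahler structure, with the $n=3$ case reducing to the identification of $1$-K\"ahler structures with K\"ahler metrics. Your version merely makes explicit the two ingredients the paper leaves implicit (Proposition \ref{HRequiv} for the positive definiteness of $\Omega$, and the remark after Definition \ref{poscondHRT} that positive definiteness implies transversality), which is a fair elaboration rather than a different argument.
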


\begin{proof}
     If $(F,\omega,\Omega)$ is a Hodge-Riemann balanced structure, then  {by \Cref{HRequiv}.\ref{hrequiv1}} $\Omega$ is a closed positive definite form, so in particular it is $p$-K\"ahler, with $p=n-2$.
\end{proof}

A first obstruction to the existence of $p$-K\"ahler structures is the following.
\begin{proposition}[{\cite[Prop. 3.4]{hmt21}}]\label{obstructpK}
    Let $(X,J)$ be a compact complex manifold of dimension $n$.
    Suppose there exists a form $\beta\in\Lambda^{2n-2p-1}$ such that   
    \begin{equation} 
        0\neq\pt{d\beta}^{n-p,n-p}=\sum_jc_j\,\psi_j\wedge\overline{\psi_j},
    \end{equation}
    where $c_j$ have the same sign and $\psi_j$ are simple $(n-p,0)$-forms.
    Then $(X,J)$ does not admit {$p$-K\"ahler} forms.
\end{proposition}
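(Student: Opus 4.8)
The plan is to argue by contradiction via Stokes' theorem, converting the common-sign hypothesis on the $c_j$ into a strict positivity that clashes with an integral forced to vanish. Suppose, to the contrary, that $(X,J)$ carries a $p$-K\"ahler structure, i.e.\ a $d$-closed, pointwise transverse $(p,p)$-form $\eta$. I would test $\eta$ against $d\beta$ by integrating $\eta\wedge d\beta$ over the compact manifold $X$.

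First I would record a degree count. Since $\eta$ has bidegree $(p,p)$, a component of $d\beta$ of bidegree $(a,b)$ with $a+b=2n-2p$ wedges with $\eta$ into a form of bidegree $\pt{p+a,p+b}$; this is of top bidegree $(n,n)$ precisely when $a=b=n-p$. Hence only the $(n-p,n-p)$-part of $d\beta$ survives in top degree, and
$$\int_X\eta\wedge d\beta=\int_X\eta\wedge\pt{d\beta}^{n-p,n-p}.$$
On the other hand, since $\deg\eta=2p$ is even and $d\eta=0$, we have $d\pt{\eta\wedge\beta}=\eta\wedge d\beta$, so Stokes' theorem on the compact, boundaryless $X$ forces the left-hand side to vanish. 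Combining the two identities,
$$\int_X\eta\wedge\pt{d\beta}^{n-p,n-p}=0.$$

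The second step is to show that the same integrand is semidefinite and not identically zero. Multiplying by the phase $i^{\pt{n-p}^2}$ and inserting the expansion $\pt{d\beta}^{n-p,n-p}=\sum_j c_j\,\psi_j\wedge\overline{\psi_j}$, I would write
$$i^{\pt{n-p}^2}\,\eta\wedge\pt{d\beta}^{n-p,n-p}=\sum_j c_j\,\pt{i^{\pt{n-p}^2}\,\eta\wedge\psi_j\wedge\overline{\psi_j}}.$$
By transversality of $\eta$, each bracketed factor equals $f_j\,\omega_{\mathrm{vol}}$ for a fixed volume form $\omega_{\mathrm{vol}}$ and a function $f_j\ge 0$, with $f_j>0$ exactly where the decomposable $(n-p,0)$-form $\psi_j$ is nonzero. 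Assuming without loss of generality that all $c_j\ge 0$, every summand is a non-negative multiple of $\omega_{\mathrm{vol}}$, so the integrand is $\ge 0$ pointwise.

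Finally I would exploit the hypothesis $\pt{d\beta}^{n-p,n-p}\neq 0$ to upgrade this to strict positivity somewhere. At a point $x$ where this form does not vanish, at least one summand $c_j\,\psi_j(x)\wedge\overline{\psi_j(x)}$ is nonzero; for that index $c_j>0$ and $\psi_j(x)\neq 0$, whence $c_j f_j(x)>0$, while every other term stays $\ge 0$. The integrand is therefore strictly positive at $x$, hence on a neighborhood by continuity, and $\ge 0$ elsewhere, so its integral is strictly positive, contradicting the vanishing above. I expect the only delicate point to be precisely this last bookkeeping: one must verify that transversality applies genuinely (the $\psi_j$ being \emph{decomposable} $(n-p,0)$-forms) and that the common sign of the $c_j$ rules out cancellations, so that the \emph{nonvanishing} of $\pt{d\beta}^{n-p,n-p}$ yields a \emph{strictly positive} integrand rather than merely a nonzero one.
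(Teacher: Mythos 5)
Your proof is correct and takes essentially the same route as the paper: the statement itself is quoted from \cite[Prop.~3.4]{hmt21}, but the paper's own proof of its generalization, Proposition~\ref{obstructcpd}, is precisely your argument --- integrate $\eta\wedge d\beta=d\pt{\eta\wedge\beta}$ over the compact $X$ to get zero by Stokes, then use transversality (there, positive definiteness) together with the common sign of the $c_j$ to make the integral strictly positive. Your explicit degree count and the pointwise upgrade from nonvanishing to strict positivity are sound and simply spell out details the paper leaves implicit.
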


This property can be generalized to closed positive definite forms.

\begin{proposition}\label{obstructcpd}
    Let $(X,J)$ be a compact complex manifold of complex dimension $n$.
    Suppose there exists a form $\gamma\in\Lambda^{2n-2p-1}$ such that   \begin{equation} 
        0\neq\pt{d\gamma}^{n-p,n-p}=
        \sum_jc_j\,\eta_j\wedge\bar{\eta}_j,
    \end{equation}
    where the $c_j$ have the same sign and $\eta_j\in\Lambda^{n-p,0}$.
    Then $(X,J)$ does not admit closed positive definite $(p,p)$-forms.
\end{proposition}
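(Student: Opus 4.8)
The plan is to adapt the integration-by-parts argument behind Proposition \ref{obstructpK}, replacing the transversality of the competing form by positive definiteness; the extra positivity is exactly what allows us to drop the assumption that the generators $\eta_j$ be simple. So suppose, for contradiction, that $(X,J)$ admits a closed positive definite $(p,p)$-form $\Omega$. First I would pair $\Omega$ with $d\gamma$ and integrate over the compact manifold $X$. Since $\deg\Omega=2p$ is even and $d\Omega=0$, we have $d(\Omega\wedge\gamma)=\Omega\wedge d\gamma$, so Stokes' theorem on the closed $X$ gives $\int_X\Omega\wedge d\gamma=0$.

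The second step is a bidegree count: as $\Omega$ has type $(p,p)$, only the component of $d\gamma$ of type $(n-p,n-p)$ contributes to the top-degree form $\Omega\wedge d\gamma$. Hence $0=\int_X\Omega\wedge d\gamma=\int_X\Omega\wedge(d\gamma)^{n-p,n-p}$.

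The third step is where positive definiteness enters, and I would argue pointwise. Fix $x\in X$. By Definition \ref{poscondHRT}, for each $j$ the $(n,n)$-form $i^{(n-p)^2}\,\Omega\wedge\eta_j\wedge\bar\eta_j$ is a non-negative multiple of a fixed volume form, strictly positive whenever $\eta_j(x)\neq 0$. Because all the $c_j$ share one sign, the integrand $i^{(n-p)^2}\,\Omega\wedge(d\gamma)^{n-p,n-p}=\sum_j c_j\,i^{(n-p)^2}\,\Omega\wedge\eta_j\wedge\bar\eta_j$ is, at every point, a multiple of the volume form of that single fixed sign; and at any point where $(d\gamma)^{n-p,n-p}\neq 0$ at least one $\eta_j(x)\neq 0$, so the multiple is strictly of that sign there. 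Integrating, $\int_X i^{(n-p)^2}\,\Omega\wedge(d\gamma)^{n-p,n-p}\neq 0$, which contradicts the vanishing obtained in the first two steps and finishes the argument.

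I do not expect a real obstacle here, the reasoning being parallel to that of Proposition \ref{obstructpK}. The one point deserving attention is that positive definiteness, unlike mere transversality, produces strict positivity against an arbitrary $\eta_j\wedge\bar\eta_j$ and not only against decomposable ones; this is precisely the input that lets us weaken the hypothesis on the $\eta_j$ while strengthening the conclusion from transverse to positive definite forms. A routine check that the sign factor $i^{(n-p)^2}$ in Definition \ref{poscondHRT} matches the one accompanying the $\eta_j\wedge\bar\eta_j$ completes the bookkeeping.
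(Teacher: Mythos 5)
Your proposal is correct and takes essentially the same route as the paper's proof: apply Stokes' theorem to $\Omega\wedge\gamma$ on the compact $X$, note that only the $(n-p,n-p)$ component of $d\gamma$ pairs with the $(p,p)$-form $\Omega$, and let positive definiteness of $\Omega$ against the $\eta_j\wedge\bar\eta_j$ force a strict sign on the integral, contradicting its vanishing. Your pointwise bookkeeping (nonnegativity everywhere, strict positivity where some $\eta_j\neq0$) is a minor elaboration of what the paper leaves implicit.
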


\begin{proof}
     {We argue by contradiction.}
    Suppose there exists a closed positive definite $(p,p)$-form $\Omega$.
    Since $X$ is compact, we have 
    $$
    0=\int_Xi^{\pt{n-p}^2}\,d\pt{\Omega\wedge\gamma}
    =\int_Xi^{\pt{n-p}^2}\,{\Omega\wedge d\gamma}
    =\sum_jc_j\int_Xi^{\pt{n-p}^2}\,{\Omega\wedge\eta_j\wedge\bar{\eta}_j}.
    $$
    If we assume all $c_j$ to be positive, the positive definiteness of $\Omega$ forces  {the above sum} to be positive, giving a contradiction.
    The case where all $c_j$ are negative is analogous.
\end{proof}

We can specialize this for $p=n-2$, to use it in the setting of balanced  structures of Hodge-Riemann type.
In fact, we will always use a particular case of this result.

\begin{corollary}\label{corpd}
    If a compact complex manifold  $(X,J)$ admits a  $3$-form $\gamma$ such that $\pt{d\gamma}^{2,2}=\eta\wedge\bar{\eta}$ for some $0\neq\eta\in\Lambda^{2,0}$, then  $(X,J)$ is not \hr balanced.
\end{corollary}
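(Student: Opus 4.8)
The plan is to obtain Corollary \ref{corpd} as a direct specialization of Proposition \ref{obstructcpd} to $p = n-2$, combined with the reformulation of the Hodge-Riemann balanced condition in Proposition \ref{HRequiv}. First I would record that, by Proposition \ref{HRequiv}\ref{hrequiv1}, a Hodge-Riemann balanced structure $(F,\omega,\Omega)$ supplies a \emph{closed positive definite} $(n-2,n-2)$-form, namely $\Omega$ itself (it is closed by condition (c) of Definition \ref{HRtype} and positive definite by \ref{hrequiv1}). Hence it suffices to show that the hypothesis on $\gamma$ prevents $(X,J)$ from carrying any closed positive definite $(n-2,n-2)$-form, and then argue by contradiction against the existence of $\Omega$.

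Next I would verify that the data of the corollary match the hypotheses of Proposition \ref{obstructcpd} with $p=n-2$. The degree bookkeeping is consistent: $2n-2p-1 = 3$, so a $3$-form $\gamma$ is exactly the right object, and $n-p = 2$, so the relevant component is $(d\gamma)^{2,2}$ with the $\eta_j \in \Lambda^{2,0}$. The corollary is precisely the single-summand instance of $\sum_j c_j\,\eta_j\wedge\bar\eta_j = \eta\wedge\bar\eta$, i.e.\ one term with coefficient $c_1 = 1 > 0$ and $\eta_1 = \eta$, so the same-sign requirement on the $c_j$ holds trivially.

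The only point needing a short check is the non-degeneracy clause $0\neq(d\gamma)^{2,2}$ appearing in Proposition \ref{obstructcpd}: I would show that $\eta \neq 0$ already forces $\eta\wedge\bar\eta \neq 0$. Writing $\eta = \sum_{i<j} a_{ij}\,dz_i\wedge dz_j$, the coefficient of the basis form $dz_i\wedge dz_j\wedge d\bar z_i\wedge d\bar z_j$ in $\eta\wedge\bar\eta$ equals $|a_{ij}|^2$, with no contribution from cross terms, which carry distinct holomorphic/antiholomorphic index sets. Thus if $\eta$ is not identically zero then $\eta\wedge\bar\eta\neq 0$ on the (nonempty open) locus where some $a_{ij}$ is nonzero, so $(d\gamma)^{2,2}\neq 0$ there. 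With this, Proposition \ref{obstructcpd} applies verbatim and excludes closed positive definite $(n-2,n-2)$-forms, contradicting the existence of $\Omega$; hence $(X,J)$ cannot be Hodge-Riemann balanced.

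I do not expect a genuine obstacle, since the statement is a corollary and all the analytic content — Stokes' theorem applied to $d(\Omega\wedge\gamma)$ together with the pointwise positivity of $i^{\pt{n-p}^2}\Omega\wedge\eta\wedge\bar\eta$ — is already carried by Proposition \ref{obstructcpd}. The only mild care lies in the two bookkeeping steps above: identifying the single positive coefficient with the same-sign hypothesis, and confirming that $\eta$ not being identically zero makes the resulting integral strictly positive rather than merely nonnegative, which is what produces the contradiction with $\int_X i^{\pt{n-p}^2}\,d(\Omega\wedge\gamma) = 0$.
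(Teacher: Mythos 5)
Your proof is correct and takes essentially the same route as the paper, which states Corollary \ref{corpd} without a separate argument, as exactly the single-summand instance ($p=n-2$, one term with $c_1=1$) of Proposition \ref{obstructcpd}, with $\Omega$ supplied by Proposition \ref{HRequiv}. Your added verification that $\eta\neq0$ forces $(d\gamma)^{2,2}=\eta\wedge\bar\eta\neq0$ correctly settles a nondegeneracy detail the paper leaves implicit.
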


A similar argument holds, using the positivity condition for \hr forms on the space of primitive $(1,1)$-forms.

\begin{proposition}\label{obstructhrt}
    Let $(X,J)$ be a compact complex manifold of complex dimension $n$ admitting  a balanced metric $F$.
    Suppose there exists a form $\gamma\in\Lambda^{3}$ such that   \begin{equation} 
        0\neq\pt{d\gamma}^{2,2}=
        \sum_jc_j\,\mu_j\wedge\bar{\mu}_j,
    \end{equation}
    where the $c_j$ have the same sign and $\mu_j$ are primitive $(1,1)$-forms.
    Then $F$ is not of \hrt. 
\end{proposition}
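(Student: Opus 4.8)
The plan is to run the same scheme as in the proof of Proposition~\ref{obstructcpd}, but to feed the closed test form into the \emph{second} positivity condition of Proposition~\ref{HRequiv} (positive definiteness of $Q_\Omega$ on primitive $(1,1)$-forms) rather than into the positive definiteness of $\Omega$ itself. So I argue by contradiction: suppose $F$ is of Hodge-Riemann type, and let $(F,\omega,\Omega)$ be an associated structure, so that $\Omega\in\Lambda^{n-2,n-2}$ is closed and $Q_\Omega$ is positive definite on $P^{1,1}$.

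First I would integrate the exact top-form $d(\Omega\wedge\gamma)$. Since $X$ is compact, Stokes gives $\int_X d(\Omega\wedge\gamma)=0$. Because $\Omega$ is closed and of even degree $2(n-2)$, we have $d(\Omega\wedge\gamma)=\Omega\wedge d\gamma$, and since $\Omega$ is of bidegree $(n-2,n-2)$ only the $(2,2)$-component of the $4$-form $d\gamma$ survives the wedge into top degree $(n,n)$. Hence $0=\int_X\Omega\wedge(d\gamma)^{2,2}=\sum_j c_j\int_X\Omega\wedge\mu_j\wedge\bar\mu_j$.

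Next I would rewrite each summand through the Hermitian form $Q_\Omega$ of Remark~\ref{rmkPosDef}. For $(p,q)=(1,1)$ the prefactor $i^{p-q}(-1)^{k(k-1)/2}$ (with $k=2$) equals $-1$, so $Q_\Omega(\mu_j,\mu_j)=-\star(\mu_j\wedge\bar\mu_j\wedge\Omega)$. Using that $\Omega$ has even degree to reorder the wedge, that $\mu_j\wedge\bar\mu_j$ is real, and that $\int_X\alpha=\int_X(\star\alpha)\,\tfrac{F^n}{n!}$ for a top form $\alpha$, I obtain $\int_X\Omega\wedge\mu_j\wedge\bar\mu_j=-\int_X Q_\Omega(\mu_j,\mu_j)\,\tfrac{F^n}{n!}$. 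Substituting, the vanishing identity becomes $\int_X\big(\sum_j c_j\,Q_\Omega(\mu_j,\mu_j)\big)\tfrac{F^n}{n!}=0$.

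The contradiction then comes from positivity. Here I would invoke the observation recorded after Definition~\ref{HRtype}: a $(1,1)$-form is primitive in the usual sense ($F^{n-1}\wedge\,\cdot\,=0$) exactly when it lies in $P^{1,1}$, so the hypothesis that the $\mu_j$ are primitive places them in the domain where $Q_\Omega$ is positive definite. Consequently each $Q_\Omega(\mu_j,\mu_j)\ge 0$ pointwise, vanishing only where $\mu_j=0$. Since all $c_j$ share one sign and $(d\gamma)^{2,2}\neq 0$ forces at least one $\mu_j$ to be nonzero on an open set, the integrand $\sum_j c_j\,Q_\Omega(\mu_j,\mu_j)$ has a definite sign and is strictly nonzero somewhere, so its integral cannot vanish, contradicting the displayed identity. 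I expect the only delicate points to be the sign and bidegree bookkeeping in passing between $\Omega\wedge\mu_j\wedge\bar\mu_j$ and $Q_\Omega(\mu_j,\mu_j)$, and being explicit that ``primitive'' is read relative to $F$, so that the second Hodge-Riemann condition is exactly the one that applies.
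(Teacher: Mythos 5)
Your proposal is correct and takes essentially the same route as the paper's proof: assume a Hodge-Riemann balanced structure $(F,\omega,\Omega)$, integrate $d\pt{\Omega\wedge\gamma}=\Omega\wedge d\gamma=\sum_jc_j\,\Omega\wedge\mu_j\wedge\bar\mu_j$ over the compact manifold via Stokes, and contradict the pointwise positive definiteness of $Q_\Omega$ on $F$-primitive $(1,1)$-forms from Proposition~\ref{HRequiv}\ref{hrequiv2}, using the remark after Definition~\ref{HRtype} to identify $P^{1,1}$ with primitivity in the usual sense. Your explicit bookkeeping (the sign $i^{p-q}\pt{-1}^{k(k-1)/2}=-1$ for $(p,q)=(1,1)$, the bidegree argument isolating $\pt{d\gamma}^{2,2}$, and the open-set nonvanishing) merely spells out what the paper compresses into ``we conclude, as in Proposition~\ref{obstructcpd}.''
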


\begin{proof}
     {We argue by contradiction, assuming there are some $\omega,\Omega$ such that}
$(F,\omega,\Omega)$ is a Hodge-Riemann balanced structure.
    By Proposition \ref{HRequiv}\ref{hrequiv1}, $Q_\Omega$ is a positive definite  Hermitian form on the space of $F$-primitive $(1,1)$-forms.
    We conclude, as in Proposition \ref{obstructcpd}, integrating 
    $
    d\pt{\Omega\wedge\gamma}=\Omega\wedge d\gamma =\sum_jc_j\,\Omega\wedge \mu_j\wedge\bar{\mu}_j
    $ over $X$.
\end{proof}

A specific instance of this obstruction can be seen as the existence of  $\del$-exact, or $\delbar$-exact, closed forms.

\begin{lemma}\label{lemmaobst}
    Let  $F$ be a balanced metric on  a compact  complex manifold $(X, J)$.
    If $(X,J)$ admits a $(1,0)$-form $\alpha$ that  satisfies one of the following conditions:
    \begin{enumerate}[label=(\roman*)]
    \item\label{lobst1} $\del\alpha\neq0$ and $\del\delbar\alpha=0$;
    \item\label{lobst2} $\delbar\alpha\neq0$, $\del\delbar\alpha=0$ and $\bar\partial\alpha$ is primitive,
    \end{enumerate}
     then $F$ is not of Hodge-Riemann type.
\end{lemma}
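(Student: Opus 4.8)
The plan is to reduce each of the two cases to the obstruction already established in Proposition \ref{obstructhrt}, by producing from $\alpha$ a suitable $3$-form $\gamma$ whose exterior derivative has a $(2,2)$-component of the forbidden sign-definite shape. The natural candidate is a multiple of $\alpha\wedge\delbar\alpha$ or $\alpha\wedge\del\alpha$, together with its conjugate, so that the resulting form is real and its $(2,2)$-part can be written as $c\,\mu\wedge\bar\mu$ with $\mu$ a primitive $(1,1)$-form; then Proposition \ref{obstructhrt} (or Corollary \ref{corpd}) forbids the Hodge-Riemann balanced condition unless the manifold is already K\"ahler.

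For case \ref{lobst1}, I would set $\gamma \coloneqq \alpha\wedge\overline{\del\alpha} + \overline{\alpha\wedge\overline{\del\alpha}}$ (or a similar real combination) and compute $d\gamma$. First I would note that $\del\alpha$ is a $(2,0)$-form and $\overline{\del\alpha} = \delbar\bar\alpha$ is a $(0,2)$-form. The key computation is that the $(2,2)$-component of $d\gamma$ collects the terms $\del\alpha\wedge\overline{\del\alpha}$ up to sign; here the hypothesis $\del\delbar\alpha=0$ is used to kill the unwanted $(2,2)$-contributions coming from differentiating $\overline{\del\alpha}$, leaving $\pt{d\gamma}^{2,2}$ equal to a nonzero multiple of $\del\alpha\wedge\overline{\del\alpha}$. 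Since $\del\alpha\in\Lambda^{2,0}$ is nonzero, this is exactly $\eta\wedge\bar\eta$ with $\eta=\del\alpha$, and Corollary \ref{corpd} applies directly, so $F$ cannot be Hodge-Riemann balanced.

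For case \ref{lobst2}, the relevant bilinear form lives on primitive $(1,1)$-forms rather than on $\Lambda^{2,0}$, so I would instead build $\gamma$ from $\alpha$ and $\delbar\alpha$, say $\gamma \coloneqq \alpha\wedge\overline{\alpha}\wedge(\text{suitable}) $ — more precisely a real $3$-form whose differential has $(2,2)$-part proportional to $\delbar\alpha\wedge\overline{\delbar\alpha}$. Writing $\mu\coloneqq \delbar\alpha$, which is a $(1,1)$-form assumed primitive and nonzero, the $(2,2)$-component becomes $c\,\mu\wedge\bar\mu$, and the hypotheses $\del\delbar\alpha=0$ and primitivity of $\delbar\alpha$ ensure respectively that the closedness bookkeeping goes through and that $\mu$ meets the primitivity requirement of Proposition \ref{obstructhrt}. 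Applying that proposition yields the conclusion.

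The main obstacle I anticipate is the exact bookkeeping of bidegrees in computing $\pt{d\gamma}^{2,2}$: one must check that all cross terms of the wrong type either vanish by type reasons or are annihilated precisely by the hypothesis $\del\delbar\alpha=0$, and that the surviving term carries a definite sign so that the $c_j$ in Proposition \ref{obstructhrt} are genuinely of one sign. A secondary subtlety is the phrase \emph{unless K\"ahler} in the statement: the obstructions of Proposition \ref{obstructhrt} and Corollary \ref{corpd} rule out the Hodge-Riemann balanced condition outright, so I would argue that if no such $\gamma$ produces a contradiction then $\del\alpha=0$ (resp.\ $\delbar\alpha=0$), contradicting the hypothesis, which forces the degenerate situation to be the K\"ahler one; making this last dichotomy precise, rather than merely invoking it, is where care is needed.
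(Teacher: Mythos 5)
Your proposal is correct and follows essentially the same route as the paper: the paper simply takes the (complex) forms $\gamma_1=\del\alpha\wedge\bar\alpha$ and $\gamma_2=\delbar\alpha\wedge\bar\alpha$, uses $\del\delbar\alpha=0$ to get $(d\gamma_1)^{2,2}=-\del\alpha\wedge\overline{\del\alpha}$ and $(d\gamma_2)^{2,2}=-\delbar\alpha\wedge\overline{\delbar\alpha}$, and concludes via Corollary \ref{corpd} and Proposition \ref{obstructhrt}, exactly as you outline (your realified $\gamma$ and the resulting factor of $2$ are immaterial, since those obstruction results allow either overall sign and do not require $\gamma$ to be real). The \emph{unless K\"ahler} clause requires no separate dichotomy argument of the kind you anticipate: the paper's proof does not address it either, as the obstruction rules out the Hodge--Riemann condition outright under hypothesis \ref{lobst1} or \ref{lobst2}.
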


\begin{proof}
    {If \cref{lobst1} holds},  consider $\gamma_1=\del\alpha\wedge\bar\alpha\in\Lambda^{2,1}$, with 
    $$
    (d\gamma_1)^{2,2}=\delbar\gamma_1=-\delbar\del\alpha\wedge\bar\alpha-\del\alpha\wedge\delbar\bar\alpha=-
    \del\alpha\wedge\delbar\bar\alpha.
    $$
    Using Corollary \ref{corpd}, we have the thesis,  {because $\partial\alpha$ is a $(2,0)$-form, non-zero by assumption}. 
    Case \ref{lobst2} is analogous, as choosing $\gamma_2=\delbar\alpha\wedge\bar\alpha\in\Lambda^{1,2}$, we obtain
    $$
    (d\gamma_2)^{2,2}=\del\gamma_2=-\del\delbar\alpha\wedge\bar\alpha-\delbar\alpha\wedge\del\bar\alpha=-\delbar\alpha\wedge\del\bar\alpha,
    $$
    and we conclude using Proposition \ref{obstructhrt}.
\end{proof}

\section{Results on Lie groups and compact quotients}\label{secSolv}
{In this section, we will specialize the results obtained in the previous section to the setting of Lie groups.
More precisely, we will begin with the case of complex Lie groups and compact complex parallelizable manifolds (\Cref{prop4.1}), proving \Cref{ThmIntro}.(i). 
Afterwards, we will start working on compact quotients of Lie groups by lattices, endowed with invariant complex structures, shifting the discussion at the Lie algebra level.
First of all, we obtain an improved version of \Cref{lemmaobst} for unimodular Lie algebras (\Cref{ddbarnonclosed}), which will be the main tool in most of the following proofs.
In fact, direct consequences of the latter are \Cref{cor4.7}, for abelian complex structures, and \Cref{hrtJg^1}, where the derived Lie algebra is considered, allowing to prove, respectively, items (v) and (vi) in \Cref{ThmIntro}.
Furthermore, we discuss the case of nilpotent Lie algebras in \Cref{csnilpNO}, proving \Cref{ThmIntro}.(iv).
\\
We conclude the section considering Lie algebras admitting a $J$-invariant abelian ideal of real codimension $2$, ultimately proving that in this case $(n-2)$-K\"ahler implies K\"ahler, in \Cref{ThmAbIdeal}, and consequently concluding the proof of \Cref{ThmIntro}.
}

\medskip
Let us now begin with the case of complex Lie groups and compact complex parallelizable manifolds.
 {We recall that these two families are connected by a result by Wang \cite{wang}, stating that a compact complex parallelizable manifold is
biholomorphic to a quotient of a complex Lie group by a discrete subgroup.}
\begin{proposition}\label{prop4.1}
If a complex Lie group $G$ admits a Hodge-Riemann balanced structure $(F, \omega,\Omega)$, then $\Omega$ cannot be left-invariant, unless $G$ is abelian.
    Similarly, the only complex parallelizable manifolds admitting balanced metrics of Hodge-Riemann type are complex tori.
\end{proposition}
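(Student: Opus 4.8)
The plan is to exploit the very rigid structure equations of a complex Lie group, feeding them into the obstruction of Corollary~\ref{corpd} for the compact statement and into an infinitesimal counterpart of it for the invariant statement. First I would fix a basis $\alpha^1,\dots,\alpha^n$ of left-invariant $(1,0)$-forms on $G$; since $G$ is a complex Lie group these are holomorphic, so $\delbar\alpha^i=0$ and $d\alpha^i=\del\alpha^i$ is a pure $(2,0)$-form, the Maurer--Cartan relation $\del\alpha^i=-\tfrac12\sum_{j,k} c^i_{jk}\,\alpha^j\wedge\alpha^k$ encoding the structure constants $c^i_{jk}$. In particular $G$ is abelian precisely when $\del\alpha^i=0$ for every $i$, so if $G$ is non-abelian I may choose $\alpha:=\alpha^k$ with $0\neq\eta:=\del\alpha\in\Lambda^{2,0}$.

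Next I would run the computation already carried out in Lemma~\ref{lemmaobst}\ref{lobst1}. Setting $\gamma:=\del\alpha\wedge\bar\alpha\in\Lambda^{2,1}$ and using $\delbar\alpha=0$ (whence $\delbar\del\alpha=-\del\delbar\alpha=0$) together with $\delbar\bar\alpha=\overline{\del\alpha}$, one gets $(d\gamma)^{2,2}=\delbar\gamma=\del\alpha\wedge\overline{\del\alpha}=\eta\wedge\bar\eta$. For the second, compact assertion this is exactly the hypothesis of Corollary~\ref{corpd}: since $X=\Gamma\backslash G$ is compact and $\eta\neq0$, that corollary forbids any Hodge--Riemann balanced structure. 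Hence a non-abelian $G$ cannot carry such a structure on its compact quotient, and as the abelian case is precisely $\C^n/\Gamma$, the only complex parallelizable manifolds admitting balanced metrics of Hodge--Riemann type are complex tori.

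For the first assertion I cannot integrate, since $G$ need not be compact, so I would replace the integration hidden in Corollary~\ref{corpd} by a pointwise argument at the Lie algebra level, made available by the invariance of $\Omega$. Because $\Omega$ is closed, left-invariant and positive definite, and $\gamma$ is left-invariant, the product $\Omega\wedge\gamma$ is a left-invariant $(2n-1)$-form with $d(\Omega\wedge\gamma)=\Omega\wedge(d\gamma)^{2,2}=\Omega\wedge\eta\wedge\bar\eta$; positive definiteness of $\Omega$ (here $i^{(n-(n-2))^2}=i^4=1$) makes the right-hand side a strictly positive volume form whenever $\eta\neq0$. On the other hand, in the setting of compact quotients the Lie algebra $\mathfrak g$ is unimodular, and on a unimodular Lie algebra $d$ annihilates every left-invariant $(2n-1)$-form; thus $d(\Omega\wedge\gamma)=0$, contradicting positivity. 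This forces $\eta=\del\alpha=0$ for all $\alpha$, i.e. $G$ abelian.

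The crux of the whole argument is the structure-equation computation showing that $(d\gamma)^{2,2}$ has the precise shape $\eta\wedge\bar\eta$ needed to trigger the positive definiteness of $\Omega$; this is exactly where holomorphicity of the coframe ($\delbar\alpha^i=0$) is indispensable. The only genuinely delicate point is the first statement, where compactness is unavailable: the main obstacle is to convert the global integration of Corollary~\ref{corpd} into the purely cohomological fact that a left-invariant exact $(2n-1)$-form must vanish, which is precisely unimodularity of $\mathfrak g$ --- automatic in our framework because we work with Lie groups admitting lattices.
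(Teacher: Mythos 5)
Your proof of the second assertion is correct and is essentially the paper's own argument: on a compact complex parallelizable manifold a non-closed invariant holomorphic $1$-form $\alpha$ satisfies $\delbar\alpha=0$ and $\del\delbar\alpha=0$, so case \ref{lobst1} of Lemma \ref{lemmaobst} (whose proof is exactly your computation $(d(\del\alpha\wedge\bar\alpha))^{2,2}=\pm\,\del\alpha\wedge\overline{\del\alpha}$ fed into Corollary \ref{corpd}) rules out Hodge-Riemann balanced metrics unless the group is abelian, i.e.\ unless $X$ is a torus.

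For the first assertion, however, there is a genuine gap. The paper proves it by citing \cite[Cor. 2.6(2)]{AlessandriniLieGroups}, which holds for an \emph{arbitrary} complex Lie group, whereas your replacement of the integration by the algebraic fact ``$d$ kills invariant $(2n-1)$-forms'' is precisely the statement that $\mathfrak g$ is unimodular --- and the first claim makes no unimodularity or lattice assumption. Your closing justification (``automatic in our framework because we work with Lie groups admitting lattices'') conflates the two halves of the proposition: only the second half concerns quotients by lattices, and the remark following the proposition in the paper makes clear the first half is aimed at non-compact groups, e.g.\ non-unimodular complex solvable ones. For a non-unimodular $\mathfrak g$ your argument genuinely fails, not just technically: there $d$ does \emph{not} annihilate all invariant $(2n-1)$-forms --- equivalently $H^{2n}(\mathfrak g)=0$, so the invariant volume form is exact in the Chevalley-Eilenberg complex --- and the identity $d(\Omega\wedge\gamma)=\Omega\wedge\eta\wedge\bar\eta>0$ produces no contradiction. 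Thus your pointwise argument proves the first assertion only for unimodular $G$ (a nice self-contained alternative in that case, since it avoids Alessandrini's theorem), but to obtain the statement as written you either need to quote \cite[Cor. 2.6(2)]{AlessandriniLieGroups} or supply a separate argument in the non-unimodular case, for instance by exploiting the modular character $\chi(X)=\operatorname{tr}(\mathrm{ad}_X)$ to correct the form $\gamma$, which your write-up does not do.
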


\begin{proof}
   The first part is an immediate consequence of \cite[Cor. 2.6(2)]{AlessandriniLieGroups}, stating that a complex Lie group admitting a left-invariant, closed, positive definite $(p,p)$-form, for some $p<n-1$, has to be abelian.
    For the second part, recall that if  $X$ is a complex parallelizable manifold, it has $n$ holomorphic $1$-forms $\omega^j$.
    Unless  $X$ is a complex torus, there exists some $j$ such that $\omega^j$ is non-closed.
    In particular, since $\omega^j$ is holomorphic, $\delbar\omega^j=0$, hence $\omega^j$ falls in case \ref{lobst1} of Lemma \ref{lemmaobst}.
\end{proof}

Notice that a non-compact, non-K\"ahler complex Lie group could still admit a non invariant positive definite  $(n -2)$-K\"ahler form, or even a Hodge-Riemann balanced structure.

Next, we prove a general necessary condition for \hr balanced Lie algebras. 
We recall the following.

\begin{proposition}[{\cite[Prop. 2.5]{fm}}]
    If a $p$-K\"ahler Lie algebra $(\mathfrak g,J, \Omega)$  {of complex dimension $n \geq 3$}  admits a closed $(1,0)$-form $\alpha$, 
   then it has a $p$-K\"ahler $J$-invariant ideal of codimension 2.
\end{proposition}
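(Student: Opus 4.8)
The plan is to build the ideal directly out of the kernel of $\alpha$. Set $\mathfrak h\coloneqq\ker\alpha=\pg{X\in\mathfrak g:\alpha(X)=0}$, viewed as a real subspace of $\mathfrak g$. Since $\alpha$ is of type $(1,0)$ we have $\alpha\pt{JX}=i\,\alpha(X)$, so $X\in\mathfrak h$ implies $JX\in\mathfrak h$; hence $\mathfrak h$ is $J$-invariant, and as $\alpha\neq0$ it has real codimension $2$. Integrability of $J$ passes to the $J$-invariant subspace $\mathfrak h$, so $\rest{J}{\mathfrak h}$ is a genuine complex structure of complex dimension $n-1$. To see that $\mathfrak h$ is an ideal, recall the Chevalley--Eilenberg formula $d\alpha(X,Y)=-\alpha\pt{[X,Y]}$: since $d\alpha=0$, the form $\alpha$ annihilates $[\mathfrak g,\mathfrak g]$, so $[\mathfrak g,\mathfrak g]\subseteq\mathfrak h$ and a fortiori $[\mathfrak g,\mathfrak h]\subseteq\mathfrak h$.

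The candidate $p$-K\"ahler form on $\mathfrak h$ is $\Omega_{\mathfrak h}\coloneqq i^*\Omega$, the pullback along the inclusion $i\colon\mathfrak h\hookrightarrow\mathfrak g$; equivalently, it is the component of $\Omega$ carrying no $\alpha$- or $\bar\alpha$-factor in the splitting $\Lambda^{1,0}\mathfrak g^*=\Lambda^{1,0}\mathfrak h^*\oplus\C\,\alpha$. As $i^*$ commutes with the differential and $\Omega$ is closed, $\Omega_{\mathfrak h}$ is closed. What remains is to verify that $\Omega_{\mathfrak h}$ is \emph{transverse} as a $(p,p)$-form on $\mathfrak h$, and this is where the real content lies.

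To prove transversality, fix a decomposable $\psi\in\Lambda^{n-1-p,0}\mathfrak h^*$ and feed the decomposable $(n-p,0)$-form $\psi\wedge\alpha$ into the transversality of $\Omega$ on $\mathfrak g$. Because $\alpha\wedge\alpha=\bar\alpha\wedge\bar\alpha=0$, every term of $\Omega$ containing an $\alpha$ or $\bar\alpha$ is killed, so only $\Omega_{\mathfrak h}$ survives:
$$
i^{\pt{n-p}^2}\,\Omega\wedge\pt{\psi\wedge\alpha}\wedge\overline{\pt{\psi\wedge\alpha}}
=i^{\pt{n-p}^2}\,\Omega_{\mathfrak h}\wedge\psi\wedge\alpha\wedge\bar\psi\wedge\bar\alpha.
$$
Moving $\alpha$ past $\bar\psi$ and using $\pt{n-p}^2=\pt{n-1-p}^2+2\pt{n-1-p}+1$ to split off the power of $i$, the two sign contributions cancel and one obtains
$$
i^{\pt{n-p}^2}\,\Omega_{\mathfrak h}\wedge\psi\wedge\alpha\wedge\bar\psi\wedge\bar\alpha
=\pq{i^{\pt{n-1-p}^2}\,\Omega_{\mathfrak h}\wedge\psi\wedge\bar\psi}\wedge\pt{i\,\alpha\wedge\bar\alpha}.
$$
The left-hand side is a positive volume form on $\mathfrak g$ by transversality of $\Omega$, and $i\,\alpha\wedge\bar\alpha$ is a positive $(1,1)$-form; hence the bracketed factor $i^{\pt{n-1-p}^2}\Omega_{\mathfrak h}\wedge\psi\wedge\bar\psi$ is a positive volume form on $\mathfrak h$. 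As $\psi$ was an arbitrary decomposable $(n-1-p,0)$-form, this is exactly the transversality of $\Omega_{\mathfrak h}$, so $\pt{\mathfrak h,\rest{J}{\mathfrak h},\Omega_{\mathfrak h}}$ is the desired $p$-K\"ahler $J$-invariant ideal of codimension $2$.

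I expect the only delicate point to be the last step: controlling the reordering sign and the power of $i$ precisely enough to conclude that positivity of $\Omega$ on $\mathfrak g$ descends to positivity of $\Omega_{\mathfrak h}$ on $\mathfrak h$. Everything else—that $\mathfrak h$ is a $J$-invariant ideal of codimension $2$ and that $\Omega_{\mathfrak h}$ is closed—is formal, following from $d\alpha=0$, the type of $\alpha$, and naturality of $i^*$ with respect to $d$. The hypothesis $n\geq3$ guarantees that $\mathfrak h$ still has complex dimension $\geq2$, so that carrying a $p$-K\"ahler structure is a meaningful condition on the quotient datum.
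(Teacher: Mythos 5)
Your proof is correct and takes essentially the same route as the source the paper cites for this statement (the paper itself only quotes \cite[Prop. 2.5]{fm} without reproducing the argument): $\ker\alpha$ is a $J$-invariant ideal of real codimension $2$ because $d\alpha=0$ gives $\alpha\left([\mathfrak g,\mathfrak g]\right)=0$ via the Chevalley--Eilenberg formula, the pullback of $\Omega$ is closed since restriction to a subalgebra commutes with $d$, and your sign bookkeeping in the transversality step is exact ($i^{(n-p)^2}=i^{(n-1-p)^2}(-1)^{n-1-p}\,i$ cancels the Koszul sign $(-1)^{n-1-p}$ from moving $\alpha$ past $\bar\psi$, and the product of the complex orientations of $\ker\alpha$ and a $J$-invariant complement is the complex orientation of $\mathfrak g$). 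The only remark worth making is that your ``delicate point'' admits a one-line shortcut via the characterization recalled in Section 2 of the paper: transverse $(p,p)$-forms are precisely those restricting to positive volume forms on $p$-dimensional complex subspaces, and every such subspace of $\ker\alpha$ is already a complex subspace of $\mathfrak g$, so transversality of the restriction is immediate.
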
  

As a consequence, for $p=n-2$, we have the following.

\begin{corollary}\label{jinvideal}
    If $(\mathfrak g,J)$ is balanced of \hrt, $\mathfrak g$ admits a $J$-invariant balanced ideal of codimension 2.
\end{corollary}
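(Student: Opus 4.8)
The plan is to deduce Corollary \ref{jinvideal} by combining the preceding Proposition (\cite[Prop. 2.5]{fm}) with Theorem \ref{theoremn-2}, so the real task is to verify that the two hypotheses of that proposition are met when $(\mathfrak g, J)$ carries a Hodge-Riemann balanced structure. First I would record that a Hodge-Riemann balanced structure $(F,\omega,\Omega)$ gives us, by Theorem \ref{theoremn-2}, a closed positive definite (in particular transverse) $(n-2,n-2)$-form $\Omega$; thus $(\mathfrak g, J, \Omega)$ is a $p$-K\"ahler Lie algebra with $p = n-2$, which is exactly the input the proposition demands. The output of the proposition, for this value of $p$, is a $J$-invariant $(n-2)$-K\"ahler ideal of codimension $2$, and I would observe that a codimension-$2$ $J$-invariant ideal is itself a complex Lie subalgebra of complex dimension $n-1$, on which an $(n-2)$-K\"ahler structure is precisely a balanced structure (since $n-2 = (n-1)-1$); hence the resulting ideal is balanced, as claimed.

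The one genuine hypothesis to supply is the existence of a closed $(1,0)$-form $\alpha$ on $\mathfrak g$, and this is where I expect the only real content to lie. The natural source is the balanced metric itself: I would consider the Lee form, or more directly argue that on any Lie algebra the dual description of closedness forces the existence of closed degree-one forms. Concretely, the space of closed $(1,0)$-forms is dual to the complement of the commutator, so it is nontrivial as soon as $\mathfrak g$ is not its own commutator. I would therefore need to rule out (or separately handle) the perfect case $[\mathfrak g, \mathfrak g] = \mathfrak g$; but a Lie algebra admitting a balanced, hence in particular a transverse $(n-1,n-1)$-form, cannot be unimodular-free in the way perfectness would require, and in any event the standing assumptions place us among solvable or nilpotent algebras where $[\mathfrak g,\mathfrak g] \subsetneq \mathfrak g$ is automatic and closed $(1,0)$-forms abound. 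I expect this to be the main obstacle only in the sense that it must be stated carefully; for the classes of Lie algebras actually used later in the paper it is immediate.

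Assuming the closed $(1,0)$-form $\alpha$ in hand, the remaining steps are bookkeeping: apply \cite[Prop. 2.5]{fm} with $p = n-2$ to $(\mathfrak g, J, \Omega)$ to extract the $J$-invariant $(n-2)$-K\"ahler ideal $\mathfrak h$ of codimension $2$, then reinterpret the $(n-2)$-K\"ahler condition on the $(n-1)$-complex-dimensional $\mathfrak h$ as balancedness, completing the proof. I would present it as a short corollary-style argument: invoke Theorem \ref{theoremn-2} to get $\Omega$ transverse and closed, produce a closed $(1,0)$-form, cite the proposition, and identify the output ideal as balanced.
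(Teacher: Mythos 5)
Your core route coincides with the paper's: the corollary is stated there as an immediate specialization of \cite[Prop.~2.5]{fm} to $p=n-2$ (the form $\Omega$ of a Hodge--Riemann balanced structure being closed and positive definite, hence $(n-2)$-K\"ahler), and your observation that an $(n-2)$-K\"ahler structure on a $J$-invariant ideal of real codimension $2$ --- a complex subalgebra of complex dimension $n-1$ --- is precisely a balanced structure is exactly the intended reading. Note, however, that the paper supplies no argument at all for the existence of a closed $(1,0)$-form: that hypothesis is simply inherited from the cited proposition, so the corollary should be read as carrying it implicitly.

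The genuine gap is in your paragraph that tries to make the closed $(1,0)$-form automatic. First, closed $(1,0)$-forms are not ``dual to the complement of the commutator'': writing $\alpha=\gamma-i\,\gamma\circ J$ with $\gamma$ real, closedness of $\alpha$ forces \emph{both} $\gamma$ and $\gamma\circ J$ to annihilate $[\mathfrak g,\mathfrak g]$, i.e.\ the annihilator of $[\mathfrak g,\mathfrak g]$ must contain a $J^*$-invariant plane, equivalently $[\mathfrak g,\mathfrak g]+J[\mathfrak g,\mathfrak g]\neq\mathfrak g$. This can fail with $\mathfrak g$ far from perfect: an almost abelian algebra whose commutator equals the codimension-one abelian ideal has one-dimensional annihilator, which is never $J^*$-invariant, so it admits no nonzero closed $(1,0)$-form. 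Second, your claim that balancedness rules out perfectness is false: $\mathfrak{sl}(2,\C)$, viewed as a real Lie algebra with its bi-invariant complex structure, is unimodular and perfect, carries no nonzero closed $1$-forms whatsoever, and yet every invariant Hermitian metric on it is balanced (the complex parallelizable case); such algebras are excluded from being Hodge--Riemann balanced by the paper's parallelizable result, not by any argument available inside this corollary. Third, there is no standing solvability or nilpotency assumption at this point of the paper, so ``closed $(1,0)$-forms abound'' cannot be invoked. If you want to discharge the hypothesis within the paper's own toolkit, the honest route is via Corollary \ref{hrtJg^1}: for \emph{unimodular} $\mathfrak g$, a Hodge--Riemann balanced structure forces $[\mathfrak g,\mathfrak g]$ to be $J$-invariant, so whenever $[\mathfrak g,\mathfrak g]\neq\mathfrak g$ its annihilator is a nonzero $J^*$-invariant subspace and any nonzero $\gamma$ in it yields the closed $(1,0)$-form $\gamma-i\,\gamma\circ J$; but this uses unimodularity and results proved after the corollary, and it still does not treat the perfect unimodular case. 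As written, your argument does not close the gap, and the correct fix is to state the corollary with the closed-$(1,0)$-form hypothesis of \cite[Prop.~2.5]{fm}, as the paper implicitly does.
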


A well-known result states that for invariant Hermitian metrics on a compact quotient of a Lie group by a lattice,  being balanced is equivalent to the orthogonality of the space of primitive forms with respect to the image of the exterior derivative of differential forms. 
Namely, an invariant Hermitian metric $F$ is balanced if and only if every exact $2$-form is primitive.

\begin{proposition}[\cite{AGS,AndradaV}]\label{andradav}
    Let $(\mathfrak g, J, F)$ be a unimodular Hermitian  Lie algebra of complex dimension $n$.
    Then,  the Hermitian structure $(J,F)$ is balanced if and only if $F^{n-1}\wedge d\alpha=0,$ for every $1$-form $\alpha$.
\end{proposition}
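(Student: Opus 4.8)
The plan is to reduce the whole equivalence to a single consequence of unimodularity, namely that on a unimodular Lie algebra $\mathfrak g$ of real dimension $2n$ every form of codimension one is closed: $d\nu = 0$ for all $\nu\in\Lambda^{2n-1}\mathfrak g^*$. I would establish this first, as it is the only place where the hypothesis is used. Fix a basis $e_1,\dots,e_{2n}$ of $\mathfrak g$ with dual basis $e^1,\dots,e^{2n}$, and let $\mu=e^1\wedge\cdots\wedge e^{2n}$ be the associated volume element. The contractions $\iota_{e_l}\mu$ span $\Lambda^{2n-1}\mathfrak g^*$, so it suffices to show each is closed. Since $d\mu=0$ for degree reasons, the algebraic Cartan formula $\mathcal L_{e_l}=d\,\iota_{e_l}+\iota_{e_l}\,d$ gives $d\,\iota_{e_l}\mu=\mathcal L_{e_l}\mu$, and a direct computation on the volume element yields $\mathcal L_{e_l}\mu=-\operatorname{tr}(\operatorname{ad}_{e_l})\,\mu$. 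Unimodularity, i.e.\ $\operatorname{tr}(\operatorname{ad}_{e_l})=0$ for every $l$, then forces $d\,\iota_{e_l}\mu=0$, so $d$ annihilates all of $\Lambda^{2n-1}\mathfrak g^*$.

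With this in hand the equivalence is essentially formal. For any $1$-form $\alpha$ the product $F^{n-1}\wedge\alpha$ has degree $2n-1$, hence is closed by the previous step. Expanding with the Leibniz rule and using that $F^{n-1}$ has even degree $2n-2$, so that the sign is $+1$,
$$
0 = d\pt{F^{n-1}\wedge\alpha} = dF^{n-1}\wedge\alpha + F^{n-1}\wedge d\alpha,
$$
which gives the identity $F^{n-1}\wedge d\alpha = -\,dF^{n-1}\wedge\alpha$ valid for every $1$-form $\alpha$.

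Both implications now read off this identity. If $F$ is balanced then $dF^{n-1}=0$, so the right-hand side vanishes and $F^{n-1}\wedge d\alpha=0$ for all $\alpha$. Conversely, if $F^{n-1}\wedge d\alpha=0$ for every $1$-form $\alpha$, then $dF^{n-1}\wedge\alpha=0$ for all $\alpha\in\Lambda^1\mathfrak g^*$. Since $dF^{n-1}$ lives in $\Lambda^{2n-1}\mathfrak g^*$ and the wedge pairing $\Lambda^{2n-1}\mathfrak g^*\times\Lambda^1\mathfrak g^*\to\Lambda^{2n}\mathfrak g^*$ is non-degenerate — a codimension-one form whose wedge with every $1$-form vanishes must itself be zero, as one sees by pairing with each dual basis covector — we conclude $dF^{n-1}=0$, that is, $F$ is balanced.

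I expect the only substantive point to be the codimension-one closedness established in the first paragraph: it is exactly there that the unimodularity assumption enters, through the identity $\mathcal L_X\mu=-\operatorname{tr}(\operatorname{ad}_X)\,\mu$. Everything afterwards is the Leibniz rule together with the non-degeneracy of the wedge pairing in complementary degrees, so no further hypotheses on $(\mathfrak g,J,F)$ are needed beyond unimodularity and the balanced condition itself.
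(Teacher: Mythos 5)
Your proof is correct. The paper does not prove this proposition itself but cites \cite{AGS,AndradaV}, and your argument --- unimodularity forces every $(2n-1)$-form on $\mathfrak g$ to be closed via $d\,\iota_{e_l}\mu=\mathcal L_{e_l}\mu=-\operatorname{tr}(\operatorname{ad}_{e_l})\,\mu$, after which the Leibniz rule and the nondegeneracy of the wedge pairing $\Lambda^{2n-1}\mathfrak g^*\times\Lambda^1\mathfrak g^*\to\Lambda^{2n}\mathfrak g^*$ yield both implications --- is exactly the standard proof found in those references.
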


\noindent
In particular, if $(J,F)$ is balanced, then for all $\mu\in\Lambda^{1,0}$, $\delbar\mu$ is a primitive $(1,1)$-form.

\begin{proposition}\label{ddbarnonclosed}
    If a unimodular Lie algebra $\mathfrak g$ admits a  \hr balanced structure $(J, F)$, then every $(1,0)$-form $\alpha$ is either $d$-closed, or both $\del\alpha$ and $\delbar\alpha$ are non-zero.
\end{proposition}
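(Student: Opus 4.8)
The plan is to prove the contrapositive via the obstruction machinery in Lemma \ref{lemmaobst}. Suppose $(J,F)$ is a \hr balanced structure on the unimodular Lie algebra $\mathfrak g$, and let $\alpha$ be a $(1,0)$-form that is \emph{not} $d$-closed. I must show that then both $\del\alpha$ and $\delbar\alpha$ are non-zero. Equivalently, I will rule out the two mixed cases: (a) $\del\alpha\neq 0$ but $\delbar\alpha=0$, and (b) $\del\alpha=0$ but $\delbar\alpha\neq 0$. If both components could not vanish simultaneously (since $\alpha$ is non-closed, at least one of $\del\alpha$, $\delbar\alpha$ is non-zero), excluding (a) and (b) leaves exactly the asserted dichotomy.

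The key observation is that Proposition \ref{andradav} supplies extra information for free in the invariant balanced setting: for every $\mu\in\Lambda^{1,0}$, the $(1,1)$-form $\delbar\mu$ is automatically primitive with respect to $F$. This is precisely the hypothesis needed to apply the primitive branch of Lemma \ref{lemmaobst}. So first I would dispatch case (a): if $\del\alpha\neq 0$ and $\delbar\alpha=0$, then in particular $\del\delbar\alpha=0$, so $\alpha$ satisfies condition \ref{lobst1} of Lemma \ref{lemmaobst}, and hence $F$ could not be of Hodge-Riemann type unless K\"ahler — contradicting our standing assumption that $(J,F)$ is a genuine (non-K\"ahler) \hr balanced structure. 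Then I would handle case (b): if $\delbar\alpha\neq 0$ and $\del\alpha=0$, then again $\del\delbar\alpha=\delbar\del\alpha=0$, and by the remark following Proposition \ref{andradav} the form $\delbar\alpha$ is primitive; thus $\alpha$ satisfies condition \ref{lobst2} of Lemma \ref{lemmaobst}, again forcing $F$ to be K\"ahler, a contradiction.

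The only subtlety I anticipate is the interpretation of the phrase ``unless K\"ahler'' in Lemma \ref{lemmaobst} relative to the phrasing of the present proposition, which does not explicitly say ``unless K\"ahler.'' The cleanest reading is that the statement is to be understood as: either $\alpha$ is $d$-closed, or both components are non-zero, \emph{provided the structure is non-K\"ahler}; in the K\"ahler case all $(1,0)$-forms relevant to the argument are already $d$-closed (since $d\alpha$ would be of a restricted type), so the dichotomy holds trivially. Thus the main step is simply recognizing which of the two hypotheses of Lemma \ref{lemmaobst} each mixed case falls into, and invoking Proposition \ref{andradav} to certify primitivity in case (b). There is essentially no computation to grind through: the compactness-integration argument underlying the obstruction is already packaged inside Lemma \ref{lemmaobst} and Corollary \ref{corpd}, and here I apply it at the Lie algebra level, where the compact quotient $\Gamma\backslash G$ furnishes the compact complex manifold on which those results are stated. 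The proof is therefore a short reduction; the ``hard part,'' such as it is, was proving the lemma, which I am entitled to assume.
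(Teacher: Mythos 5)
Your argument is essentially identical to the paper's proof: the paper likewise invokes Proposition \ref{andradav} to note that $\delbar\alpha$ is primitive for any $(1,0)$-form $\alpha$ in the invariant balanced setting, and then applies conditions \ref{lobst1} and \ref{lobst2} of Lemma \ref{lemmaobst} to rule out the two mixed cases ($\del\alpha\neq0$, $\delbar\alpha=0$ and $\del\alpha=0$, $\delbar\alpha\neq0$). Your side remark about the ``unless K\"ahler'' clause is handled by the paper in the same implicit way -- the integration obstruction contradicts the Hodge-Riemann property itself, which K\"ahler metrics possess, so no genuine exception arises -- and the unimodularity hypothesis is precisely what makes the Stokes-type argument of the lemma work at the Lie algebra level.
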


\begin{proof}
     {
    For every balanced structure $(J,F)$ on $\mathfrak g$, if $\alpha$ is a $(1,0)$-form, we have from Proposition \ref{andradav} that $\delbar\alpha$ is primitive with respect to $F$.
    If $d\alpha\neq0$ but $\del\alpha=0$ or $\delbar\alpha=0$, then the assumptions of \Cref{lemmaobst} hold and consequently $F$ cannot be of Hodge Riemann type.}    
\end{proof}

\begin{remark}
    In what follows, we will primarily use this result to find obstructions at the Lie algebra level to the existence of \hr balanced structures.
    Note that if the said obstruction is $\alpha\in\Lambda^{1,0}$, with $\del\alpha=0$, $\delbar\alpha\neq0$, then we will actually be using condition \ref{lobst2} in Lemma \ref{lemmaobst}.
    Now, in the invariant case, namely at the Lie algebra level, $\delbar\alpha$ is always primitive, whereas this does not hold on the underlying manifold, unless the balanced metric we consider is invariant.
    In other words, we are saying that such an invariant $(1,0)$-form will give an obstruction, as in Lemma \ref{lemmaobst}, only if the given balanced metric $F$ is itself invariant.
\end{remark}

As a consequence of this last result, in the unimodular case, we find necessary conditions on the complex structure  on a Lie algebra to admit a compatible  balanced metric of Hodge-Riemann type.

\begin{corollary}\label{cor4.7}
    If $(\mathfrak g,J)$ is a unimodular non abelian Lie algebra endowed with an abelian complex structure, it does not admit balanced metrics of Hodge-Riemann type.
\end{corollary}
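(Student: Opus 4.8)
The plan is to apply Proposition \ref{ddbarnonclosed} to an abelian complex structure and derive a contradiction with non-abelianity of $\mathfrak g$. Recall that a complex structure $J$ is \emph{abelian} when $[Jx,Jy]=[x,y]$ for all $x,y\in\mathfrak g$, which at the level of forms is equivalent to the condition that $\del\alpha=0$ for every $(1,0)$-form $\alpha$; equivalently, the entire differential of a $(1,0)$-form is of type $(1,1)$, so $d\alpha=\delbar\alpha$ and there is no $(2,0)$-component. The first step is to make this characterization precise: for an abelian $J$, the exterior derivative maps $\Lambda^{1,0}$ into $\Lambda^{1,1}$, so for each $(1,0)$-form $\alpha$ we have $\del\alpha=0$.

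With this in hand, suppose for contradiction that $(\mathfrak g,J)$ admits a balanced metric $F$ of Hodge-Riemann type. Since $\mathfrak g$ is unimodular, Proposition \ref{ddbarnonclosed} applies, and it tells us that every $(1,0)$-form $\alpha$ is either $d$-closed, or has both $\del\alpha\neq 0$ and $\delbar\alpha\neq 0$. But abelianity forces $\del\alpha=0$ for \emph{every} $(1,0)$-form, so the second alternative is never available. Hence every $(1,0)$-form must be $d$-closed. This is the crux: the Hodge-Riemann obstruction (via case \ref{lobst2} of Lemma \ref{lemmaobst}, which is what underlies Proposition \ref{ddbarnonclosed}) rules out the mixed situation $\del\alpha=0,\ \delbar\alpha\neq0$, and abelianity rules out the fully non-closed situation.

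The final step is to observe that if every $(1,0)$-form is $d$-closed, then in particular every $(1,0)$-form is $\delbar$-closed, i.e.\ $\delbar\alpha=0$ for all $\alpha\in\Lambda^{1,0}$. Combined with the abelian condition $\del\alpha=0$, this gives $d\alpha=0$ for all $\alpha\in\Lambda^{1,0}$, and by conjugation $d\bar\alpha=0$ for all $(0,1)$-forms as well. Since the $(1,0)$- and $(0,1)$-forms together generate $\mathfrak g^*$, every element of $\mathfrak g^*$ is closed, which by the Chevalley--Eilenberg description of the differential means the Lie bracket vanishes identically, i.e.\ $\mathfrak g$ is abelian. This contradicts the hypothesis that $\mathfrak g$ is non-abelian, completing the argument.

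The main obstacle, such as it is, lies in correctly invoking the primitivity bookkeeping: Proposition \ref{ddbarnonclosed} is stated so that the only forbidden configurations are $\del\alpha\neq0,\delbar\alpha=0$ and $\del\alpha=0,\delbar\alpha\neq0$, and I must make sure that for an abelian structure the relevant obstruction is genuinely the second of these, using that $\delbar\alpha$ is automatically primitive at the Lie algebra level (Proposition \ref{andradav}). The remaining point requiring a small check is that abelianity of $J$ indeed kills the $(2,0)$-part of $d$ on $(1,0)$-forms; this is a standard equivalence but should be stated cleanly so that the dichotomy of Proposition \ref{ddbarnonclosed} collapses to the single conclusion that all $(1,0)$-forms are closed. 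Everything after that is formal.
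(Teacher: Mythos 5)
Your proof is correct and takes essentially the same approach as the paper: abelianity of $J$ gives $\del\alpha=0$ for every $(1,0)$-form, so the dichotomy of Proposition \ref{ddbarnonclosed} (whose primitivity input you rightly trace to Proposition \ref{andradav} and case \ref{lobst2} of Lemma \ref{lemmaobst}, valid here since at the Lie algebra level the metric is invariant) forces all $(1,0)$-forms to be closed, whence $\mathfrak g$ is abelian. The paper merely phrases this contrapositively, extracting a non-closed $(1,0)$-form from non-abelianity and contradicting the dichotomy directly; the content is identical.
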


\begin{proof}
    Abelian complex structures are characterized by the condition $d\pt{\Lambda^{1,0}}\subset\Lambda^{1,1}.$
    If $\mathfrak g$ is not abelian, there must be some non-closed $(1,0)$-form $\alpha$, and $d\alpha\in\Lambda^{1,1}$, so $\del\alpha=0$.
    We conclude using Proposition \ref{ddbarnonclosed}.
\end{proof}

A more general obstruction, once again consequence of Proposition \ref{ddbarnonclosed}, regards the relation between $J$ and the derived algebra of $\mathfrak g$.

\begin{corollary}\label{hrtJg^1}
    If a unimodular Lie algebra  $\mathfrak g$ has  a Hodge-Riemann balanced structure $(J, F)$, then  the commutator ideal $\mathfrak g^1\coloneqq[\mathfrak g,\mathfrak g]$ has to be  $J$-invariant.
\end{corollary}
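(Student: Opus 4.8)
The plan is to translate the statement into a claim about $1$-forms and then invoke Proposition \ref{ddbarnonclosed}. Recall the Chevalley--Eilenberg formula $d\beta(X,Y)=-\beta([X,Y])$ for a left-invariant $1$-form $\beta$, which shows that $\beta$ is $d$-closed precisely when it annihilates $\mathfrak g^1=[\mathfrak g,\mathfrak g]$. Hence the space $Z^1$ of closed $1$-forms is exactly the annihilator $(\mathfrak g^1)^\circ\subset\mathfrak g^*$. Since a subspace $W\subseteq\mathfrak g$ is $J$-invariant if and only if its annihilator $W^\circ$ is invariant under the transpose action $J^t$ (the standard duality between $J$-invariant subspaces and their annihilators), it suffices to prove that $Z^1$ is $J^t$-invariant.

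First I would take an arbitrary closed real $1$-form and write $\beta=\alpha+\bar\alpha$ with $\alpha\in\Lambda^{1,0}$. Because $J$ is integrable, $d\alpha$ has no $(0,2)$-component and $d\bar\alpha$ has no $(2,0)$-component, so the $(2,0)$-part of $d\beta$ is exactly $\del\alpha$. Thus $d\beta=0$ forces $\del\alpha=0$. At this point I would apply Proposition \ref{ddbarnonclosed}: since $\del\alpha=0$, the dichotomy stated there rules out the possibility that $\alpha$ is non-$d$-closed (which would require both $\del\alpha$ and $\delbar\alpha$ to be nonzero), so $\alpha$, and hence its conjugate $\bar\alpha$, must be $d$-closed.

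It then remains to compute the action of $J$ and conclude. On $1$-forms $J^t$ acts as multiplication by $i$ on $\Lambda^{1,0}$ and by $-i$ on $\Lambda^{0,1}$, so $J^t\beta=i(\alpha-\bar\alpha)$; since both $\alpha$ and $\bar\alpha$ are closed, $J^t\beta$ is closed as well. Therefore $Z^1$ is $J^t$-invariant, and by the annihilator duality above $\mathfrak g^1$ is $J$-invariant, which is the desired conclusion.

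The only genuine step is the upgrade from $\del\alpha=0$ to $d\alpha=0$ through Proposition \ref{ddbarnonclosed}; this is where unimodularity and the Hodge-Riemann balanced hypothesis enter (via Proposition \ref{andradav}, which guarantees that $\delbar$ of a $(1,0)$-form is primitive). The bidegree bookkeeping and the annihilator duality are routine, so I expect no real obstacle beyond keeping the conjugation and bidegree conventions consistent.
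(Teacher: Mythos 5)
Your proof is correct and is essentially the paper's argument read contrapositively: the paper assumes $\mathfrak g^1$ is not $J$-invariant, builds a real $1$-form $\gamma$ with $d\gamma\neq0$ and $dJ\gamma=0$, and applies Proposition \ref{ddbarnonclosed} to the $(1,0)$-form $\gamma-iJ\gamma$ (which is, up to a constant, the $(1,0)$-part of the closed form $J\gamma$), which is exactly your step that a closed real $1$-form has $\del$-closed, hence $d$-closed, $(1,0)$-part. Your annihilator-duality packaging $Z^1=(\mathfrak g^1)^\circ$ replaces the paper's explicit witness $X\in\mathfrak g^1$ with $JX\notin\mathfrak g^1$, but the key lemma and the integrability bookkeeping are identical.
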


\begin{proof}
    The statement is trivially true for an abelian Lie algebra
     {$\mathfrak g.$
    If $\mathfrak g$ is not abelian, we assume that $\mathfrak g^1$ is not $J$-invariant and we apply \Cref{ddbarnonclosed}.}
    
    Suppose that there exists $X\in\mathfrak g^1$ such that $JX\notin\mathfrak g^1$.
    Consider a real $1$-form $\gamma$ on $\mathfrak g$ satisfying $\gamma(X)=1$.
    We have $d\gamma\neq0$ and $dJ\gamma=0$.
    Then, $\alpha\coloneqq\gamma-iJ\gamma$ is a $(1,0)$-form on $\mathfrak g_\C$ and $d\alpha=d\gamma\neq0$ is a real form, so by integrability of $J$, $\alpha$ is $\del$-closed, but not $\delbar$-closed.
    It follows by Proposition \ref{ddbarnonclosed} that $(\mathfrak g,J)$ cannot admit Hodge-Riemann balanced structures.
\end{proof}

We will now discuss the nilpotent case, showing that the only nilpotent Lie algebra admitting \hr balanced structures is the abelian one.

\begin{theorem}\label{csnilpNO}
    Let $(\mathfrak{g},J)$ be a nilpotent Lie algebra of dimension $2n$ endowed with a complex structure.
    If $(\mathfrak{g},J)$ has a Hodge-Riemann balanced metric, then $\mathfrak g$  is abelian.
\end{theorem}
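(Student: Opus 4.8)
The plan is to argue by contradiction: assume $(\mathfrak g,J)$ carries a Hodge-Riemann balanced structure $(F,\omega,\Omega)$ while $\mathfrak g$ is non-abelian. A nilpotent Lie algebra is unimodular, so the obstructions of Section~\ref{secpK} hold at the Lie algebra level — the appeal to Stokes' theorem in Lemma~\ref{lemmaobst} and Proposition~\ref{obstructhrt} being replaced by the vanishing of $d$ on $\Lambda^{2n-1}\mathfrak g^*$, valid precisely because $\mathfrak g$ is unimodular. In particular Proposition~\ref{ddbarnonclosed} applies and, by Corollary~\ref{hrtJg^1}, the commutator $\mathfrak g^1=[\mathfrak g,\mathfrak g]$ is $J$-invariant. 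Since $d\alpha(X,Y)=-\alpha([X,Y])$, a $(1,0)$-form is closed if and only if it annihilates $\mathfrak g^1$; thus the space $C$ of closed $(1,0)$-forms is the $(1,0)$-part of $\mathrm{Ann}(\mathfrak g^1)\otimes\C$, and $C\oplus\bar C=\mathrm{Ann}(\mathfrak g^1)\otimes\C$.

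The engine of the argument is that any \emph{non-closed} $(1,0)$-form $\alpha$ with $\del\delbar\alpha=0$ forces a contradiction: if $\del\alpha\neq0$ it meets condition~\ref{lobst1} of Lemma~\ref{lemmaobst}, and if $\del\alpha=0$ then $\delbar\alpha=d\alpha\neq0$ is primitive by Proposition~\ref{andradav}, so condition~\ref{lobst2} applies; either way $F$ is not of Hodge-Riemann type. It therefore suffices to produce such an $\alpha$. The natural candidate is a $(1,0)$-form annihilating $\mathfrak g^2=[\mathfrak g,\mathfrak g^1]$: one checks $d\bigl(\mathrm{Ann}(\mathfrak g^2)\bigr)\subseteq\Lambda^2\,\mathrm{Ann}(\mathfrak g^1)$, so for such $\alpha$ the component $\delbar\alpha$ lies in $C\wedge\bar C$; writing $\delbar\alpha=\sum_{j,l}c_{jl}\,\omega^j\wedge\bar\omega^l$ with $\omega^j,\omega^l\in C$,
$$
\del\delbar\alpha=\sum_{j,l}c_{jl}\bigl(\del\omega^j\wedge\bar\omega^l-\omega^j\wedge\del\bar\omega^l\bigr)=0,
$$
since $\del\omega^j=0$ and $\del\bar\omega^l=\overline{\delbar\omega^l}=0$. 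Such a non-closed $\alpha$ exists as soon as $\mathfrak g^2+J\mathfrak g^2\subsetneq\mathfrak g^1$, and then the above produces the required contradiction, so $\mathfrak g$ must be abelian.

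The step I expect to be the main obstacle is guaranteeing this last strict inclusion $\mathfrak g^2+J\mathfrak g^2\subsetneq\mathfrak g^1$ — equivalently, the existence of a non-closed $(1,0)$-form $\alpha$ with $\del\delbar\alpha=0$. For a general complex structure Salamon's theorem only places $d\omega^i$ in the ideal generated by $\omega^1,\dots,\omega^{i-1}$, so a priori $\delbar\alpha$ may involve some $\bar\omega^m$ whose conjugate $\omega^m$ is not closed, and then the cancellation above fails. Closing this gap should proceed either by choosing the $(1,0)$-coframe compatibly with the lower central series (using the $J$-invariance of $\mathfrak g^1$ to push the triangular structure one step deeper), or by inducting on $n$ via the $J$-invariant balanced ideal of real codimension two furnished by Corollary~\ref{jinvideal}, with base case $n=3$ settled by Theorem~\ref{theoremn-2} and the fact that Kähler nilmanifolds are tori. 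The positivity input needed throughout is exactly that recorded in Proposition~\ref{HRequiv}.
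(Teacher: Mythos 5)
Your reduction is sound and is, in fact, exactly the paper's machinery: the dichotomy you call the ``engine'' is Proposition \ref{ddbarnonclosed}, proved there from Lemma \ref{lemmaobst} and Proposition \ref{andradav} precisely as you describe, with unimodularity of $\mathfrak g$ replacing Stokes' theorem. The genuine gap is the one you flag yourself: you never establish $\mathfrak g^2+J\mathfrak g^2\subsetneq\mathfrak g^1$, and it does not follow from anything you prove. Corollary \ref{hrtJg^1} makes $\mathfrak g^1$ $J$-invariant but says nothing about $\mathfrak g^2$; if $\mathfrak g^2$ is totally real with $J\mathfrak g^2$ a complement of $\mathfrak g^2$ inside $\mathfrak g^1$ (which is consistent with $J$-invariance of $\mathfrak g^1$, requiring only $\dim_\R\mathfrak g^2=\tfrac12\dim_\R\mathfrak g^1$ and $\mathfrak g^2\cap J\mathfrak g^2=0$), then every $(1,0)$-form annihilating $\mathfrak g^2$ annihilates $\mathfrak g^2+J\mathfrak g^2=\mathfrak g^1$ and is therefore closed, so your candidate construction produces nothing. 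Since the argument is by contradiction, you cannot assume this configuration away; you would have to exclude it using the Hodge-Riemann hypothesis, and no route is given. Your fallback (ii) also does not close the gap: Corollary \ref{jinvideal} furnishes a $J$-invariant ideal $\mathfrak h$ of complex dimension $n-1$ that is only \emph{balanced} --- the restriction of $\Omega$ to $\mathfrak h$ is an $(n-2,n-2)$-form, hence of top-minus-one bidegree on $\mathfrak h$ --- so the inductive hypothesis (a Hodge-Riemann balanced structure on $\mathfrak h$) is simply not available, and the induction cannot be run as stated.

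The paper closes exactly this gap by abandoning the lower central series of $\mathfrak g$ and using instead the nilpotency of the complex subalgebra $\mathfrak g^{1,0}\subset\mathfrak g_\C$, which is dual to $\del$ since $\del\alpha(Z,W)=-\alpha([Z,W])$ for $Z,W\in\mathfrak g^{1,0}$: there is a basis $\pg{\alpha^j}_{j=1}^n$ of $\Lambda^{1,0}$ with $\del\alpha^1=0$ and $\del\alpha^j\in\Lambda^2\langle\alpha^1,\dots,\alpha^{j-1}\rangle$. This triangularization is adapted to $\del$ alone, so no compatibility between $J$ and $\mathfrak g^2$ is ever needed. If all $\del\alpha^j$ vanish, Proposition \ref{ddbarnonclosed} makes every $\alpha^j$ closed and $\mathfrak g$ abelian. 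Otherwise let $t$ be minimal with $\del\alpha^t\neq0$: the same dichotomy forces $\alpha^1,\dots,\alpha^{t-1}$ to be $d$-closed (they are $\del$-closed, so the second branch is excluded), hence $\del\alpha^t$ is a sum of wedges of closed $1$-forms, $d\,\del\alpha^t=0$, and so $\del\delbar\alpha^t=-\delbar\del\alpha^t=0$; thus $\alpha^t$ falls under case \ref{lobst1} of Lemma \ref{lemmaobst} (via Corollary \ref{corpd}), a contradiction. This is your proposed repair (i), but carried out in the right place --- the flag sits in $\mathfrak g^{1,0}$ rather than in $\mathfrak g$ --- and it renders Corollary \ref{hrtJg^1} and the annihilator of $\mathfrak g^2$ unnecessary.
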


\begin{proof}
    We can see $\mathfrak g^{1,0}$ as a nilpotent complex subalgebra of $\mathfrak g_\C$.
    In terms of the complex structure equations, this means that there exists a basis  $\pg{\alpha^j}_{j=1}^n$ of $\Lambda^{1,0}$ with
    \begin{equation}\label{nilpot}
    \del\alpha^1=0,\quad\del\alpha^j\in\Lambda^2\ps{\alpha^1,\dots}{\alpha^{j-1}},\quad j=2,\dots,n.
    \end{equation}
    Let $t\in\pg{2, \dots,  n}$ be the integer such that $\del\alpha^1=\dots=\del\alpha^{t-1}=0$, but $\del\alpha^t\neq0$.
    If $(\mathfrak{g},J)$ has a balanced Hodge-Riemann metric, by Proposition \ref{ddbarnonclosed} we find that $\alpha^k$ is closed, for all $1\le k<t$.
    This, together with \eqref{nilpot}, implies in particular that $d\,\del\alpha^{t}=0$, meaning that $\alpha^t$ satisfies condition \ref{lobst1} of Lemma \ref{lemmaobst}, allowing us to conclude.
\end{proof}

We end the section with the case of  Lie algebras with an abelian $J$-invariant ideal of codimension $2$.
This includes the case of almost abelian Lie algebras.
In \cite[Thm. 4.2]{fm}, the authors proved that $(n-2)$-K\"ahler almost abelian Lie algebras are actually abelian.
A straightforward consequence is the following.

\begin{proposition}\label{prop4.10}
    An almost abelian solvmanifold, endowed with an invariant complex structure, is \hr balanced if and only if it is the complex torus.
\end{proposition}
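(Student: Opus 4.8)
The plan is to reduce the statement to the cited Lie-algebra result \cite[Thm. 4.2]{fm}, combining Theorem \ref{theoremn-2} with a symmetrization argument that replaces the (possibly non-invariant) Hodge-Riemann structure by an \emph{invariant} $(n-2)$-K\"ahler form at the Lie algebra level.

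The backward implication I would dispatch immediately: a complex torus is K\"ahler, and as in the Example following Definition \ref{HRtype}, any K\"ahler form $\omega$ together with $\Omega=\omega^{n-2}/(n-2)!$ gives a Hodge-Riemann balanced structure.

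For the forward implication, let $X=\Gamma\backslash G$ be an almost abelian solvmanifold with invariant complex structure $J$ carrying a \hr balanced structure $(F,\omega,\Omega)$, where $F$ need not be invariant. First I would invoke Theorem \ref{theoremn-2} to see that $\Omega$ is a closed, pointwise transverse $(n-2,n-2)$-form, so that $X$ is $(n-2)$-K\"ahler. The crux is then to produce an invariant such form. Since $G$ is solvable and admits a lattice it is unimodular, so $X$ carries a $G$-invariant volume and the symmetrization $\mu(\Omega)=\int_{\Gamma\backslash G}L_g^*\Omega\,dg$ is well defined and commutes with $d$; hence $\mu(\Omega)$ is closed. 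Because $J$ is left-invariant each $L_g$ is a biholomorphism, so every $L_g^*\Omega$ remains a transverse $(n-2,n-2)$-form, and transversality, being a pointwise open convex condition, is preserved under averaging; thus $\mu(\Omega)$ is transverse and descends to an $(n-2)$-K\"ahler structure on the almost abelian Lie algebra $\mathfrak g$.

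To finish I would apply \cite[Thm. 4.2]{fm}, by which an $(n-2)$-K\"ahler almost abelian Lie algebra is abelian; hence $\mathfrak g$ is abelian, $G=\R^{2n}$, and $X$ is a torus whose invariant (automatically integrable) complex structure makes it a complex torus. I expect the main obstacle to be the symmetrization step, namely checking that averaging a possibly non-invariant transverse form returns an invariant transverse one: this relies on unimodularity (to have an invariant average commuting with $d$) and on the convexity of the transversality cone. Everything else is a direct appeal to the quoted results.
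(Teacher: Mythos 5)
Your proposal is correct and follows essentially the same route as the paper, which also combines Theorem \ref{theoremn-2} with the symmetrization of the $(n-2)$-K\"ahler form to reduce to the invariant setting and then invokes \cite[Thm. 4.2]{fm} to conclude that the Lie algebra is abelian. The only cosmetic caveat is that on $\Gamma\backslash G$ the symmetrization should be phrased as integrating the form against invariant vector fields with respect to the bi-invariant volume (unimodularity), rather than via $L_g^*$, which does not literally act on the quotient; this is the standard averaging argument and does not affect the proof.
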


Note that we did not require any of the differential forms giving the \hr balanced structure to be invariant.
This is because by symmetrization, \cite[Thm. 4.2]{fm} actually states that almost abelian solvmanifolds with an invariant complex structure cannot be $(n-2)$-K\"ahler, unless K\"ahler.
In what follows, we will see that the same holds in this more general setting.

\begin{proposition}\label{abidn-2k}
    If  $(\mathfrak g,J)$ is a unimodular, $(n-2)$-K\"ahler Lie algebra of complex dimension $n\geq4$, admitting a $J$-invariant abelian ideal $\mathfrak a$ of real codimension $2$, then the complex structure equations can be written as 
    \begin{equation}\label{cseabid}\begin{cases}
    d\alpha^1=0 &\\
    d\alpha^j={ v_j}\alpha^{1\bar 1}-\pt{\bar\lambda_j\alpha^1-{\lambda_j}\alpha^{\bar 1}}\wedge\alpha^j, &   j=2,\dots n,
\end{cases}
\end{equation}
for some $v_j,\lambda_j\in\C$.
\end{proposition}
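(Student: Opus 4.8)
The plan is to choose a coframe adapted to $\mathfrak{a}$ and then impose the hypotheses in the order: ideal, abelian, integrability, unimodularity, $(n-2)$-K\"ahler. First I would fix the coframe. Since $\mathfrak{a}$ is $J$-invariant of real codimension $2$ it has complex dimension $n-1$, and its annihilator meets $\Lambda^{1,0}$ in a line; let $\alpha^1$ span it and complete to a $(1,0)$-coframe $\alpha^1,\dots,\alpha^n$ with $\alpha^2,\dots,\alpha^n$ restricting to a coframe of $\mathfrak{a}^{1,0}$, and write $Z_1,\dots,Z_n$ for the dual $(1,0)$-frame. That $\mathfrak{a}$ is an ideal says $\operatorname{ad}_{Z_1}$ and $\operatorname{ad}_{\bar Z_1}$ preserve $\mathfrak{a}_{\C}$, and integrability ($d\Lambda^{1,0}\subset\Lambda^{2,0}\oplus\Lambda^{1,1}$) forces the $(0,1)$-component of $[Z_1,Z_k]$ to vanish, so $T:=\operatorname{ad}_{Z_1}|_{\mathfrak{a}^{1,0}}$ is a genuine endomorphism of $\mathfrak{a}^{1,0}$. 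Because $\mathfrak{a}$ is an ideal, $d\alpha^1$ cannot involve any $\mathfrak{a}$-direction, so $d\alpha^1=c\,\alpha^{1\bar1}$ for some $c\in\C$; because $\mathfrak{a}$ is abelian, no $\alpha^k\wedge\alpha^l$, $\alpha^k\wedge\bar\alpha^l$ or $\bar\alpha^k\wedge\bar\alpha^l$ with $k,l\ge2$ can occur, and integrability removes the $(0,2)$-part, so each $d\alpha^j$ ($j\ge2$) has the form
\[
d\alpha^j=v_j\,\alpha^{1\bar1}+\sum_{k\ge2}a_{jk}\,\alpha^1\wedge\alpha^k+\sum_{k\ge2}b_{jk}\,\alpha^1\wedge\bar\alpha^k+\sum_{k\ge2}c_{jk}\,\bar\alpha^1\wedge\alpha^k,
\]
where $a=(a_{jk})$ encodes $T$ and $c=(c_{jk})$ the $\mathfrak{a}^{1,0}$-part of $\operatorname{ad}_{\bar Z_1}$. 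Matching against the claimed normal form amounts to showing $c=0$ in $d\alpha^1$, that $b_{jk}=0$, that $a$ and $c$ are diagonal in a common coframe, and that $a_{jj}=-\overline{c_{jj}}$, so that $\lambda_j:=c_{jj}$ gives $a_{jj}=-\bar\lambda_j$.

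The remaining input is unimodularity together with $(n-2)$-K\"ahlerianity. Unimodularity, i.e.\ $\operatorname{tr}\operatorname{ad}_X=0$, yields one complex relation among $c$, $\operatorname{tr}T$ and $\operatorname{tr}(\operatorname{ad}_{\bar Z_1}|_{\mathfrak{a}^{1,0}})$, and equivalently makes $d$ vanish on $\Lambda^{2n-1}$. Combined with $d\Omega=0$ for the invariant transverse $(n-2,n-2)$-form $\Omega$, this gives $\int\Omega\wedge d\gamma=\int d(\pm\,\Omega\wedge\gamma)=0$ for every invariant $\gamma\in\Lambda^3$, exactly as in the proof of Proposition~\ref{obstructcpd}. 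I would then feed in test forms $\gamma$ (products such as $\alpha^1\wedge\bar\alpha^k\wedge\bar\alpha^l$ and the $\del\alpha^j\wedge\bar\alpha^j$ of Lemma~\ref{lemmaobst}) whose $(2,2)$-component is a decomposable $\pm\psi\wedge\bar\psi$; since such $\psi\in\Lambda^{2,0}$ is decomposable, transversality of $\Omega$ gives $\Omega\wedge\psi\wedge\bar\psi>0$, so the vanishing of the integral forces the offending coefficient to be zero. Running this over a suitable family of $\gamma$'s should kill the $b_{jk}$ and the off-diagonal parts of $a$ and $c$, force $c=0$, and produce $a_{jj}=-\overline{c_{jj}}$; a $\operatorname{GL}(\mathfrak{a}^{1,0})$-change of $\alpha^2,\dots,\alpha^n$ then diagonalizes simultaneously and puts the equations in the stated shape.

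The main obstacle is precisely this last step. Unlike the Hodge-Riemann setting, where $\Omega$ is positive definite and Corollary~\ref{corpd} applies to \emph{any} $\eta\in\Lambda^{2,0}$, here $\Omega$ is only transverse, so the integration argument yields a sign-definite conclusion only when the relevant $(2,0)$-form is decomposable. The delicate point is thus to arrange the test forms so that each forbidden coefficient appears in isolation as a decomposable obstruction, while using the Jacobi identity to reduce $T$ and $\operatorname{ad}_{\bar Z_1}|_{\mathfrak{a}^{1,0}}$ to commuting, semisimple operators; only then do simultaneous diagonalizability, the vanishing $c=0$, and the conjugate relation $a_{jj}=-\overline{c_{jj}}$ hold together rather than as independent facts, since unimodularity alone ties them into a single scalar identity that the transversality of $\Omega$ must refine direction by direction.
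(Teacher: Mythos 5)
Your setup is sound: the adapted coframe, the general shape
$d\alpha^1=c\,\alpha^{1\bar 1}$, $d\alpha^j=v_j\,\alpha^{1\bar1}+\sum_k a_{jk}\,\alpha^1\wedge\alpha^k+\sum_k b_{jk}\,\alpha^1\wedge\bar\alpha^k+\sum_k c_{jk}\,\bar\alpha^1\wedge\alpha^k$
forced by the ideal, abelian and integrability hypotheses, and the observation that unimodularity makes $d$ vanish on $\Lambda^{2n-1}$, so $\Omega\wedge d\gamma=0$ pointwise for every $\gamma\in\Lambda^3$, are all correct and are essentially the starting point of \cite{GuoZheng}. But the core of your plan has a genuine gap, and it sits exactly where you flag it. First, the quantities you propose to kill --- the $b_{jk}$, the off-diagonal entries of $a$ and $c$, and the defect $a_{jj}+\overline{c_{jj}}$ --- are not basis-invariant: the statement to prove is the \emph{existence} of a coframe in which the equations take the shape \eqref{cseabid}, and in a generic admissible coframe these coefficients are nonzero even for algebras satisfying the conclusion. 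An obstruction argument of the type of Proposition \ref{obstructcpd} can only rule out invariantly defined configurations (this is how Theorem \ref{ThmAbIdeal} works); it cannot single out the right basis. Second, the relation $a_{jj}=-\overline{c_{jj}}$ is an adjointness statement, $A=-C^*$, which only makes sense relative to a Hermitian inner product on $\mathfrak a^{1,0}$ --- and your proposal never constructs one. Transversality is an open, convex-cone condition and sign tests against decomposable $(2,0)$-forms cannot pin down a specific unitary structure. Third, the claim that the Jacobi identity reduces $\operatorname{ad}_{Z_1}|_{\mathfrak a^{1,0}}$ and the $\mathfrak a^{1,0}$-part of $\operatorname{ad}_{\bar Z_1}$ to commuting semisimple operators is false: Jacobi yields matrix identities of the form $[C,A]+cA=B\bar B$ and $CB+cB=B\bar A$, which are satisfied, e.g., with $A$ nilpotent nonzero and $B=C=0$, $c=0$ (nilpotent algebras with codimension-two abelian ideal). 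Such algebras are excluded only by the $(n-2)$-K\"ahler hypothesis, which your scheme has not yet brought to bear in the needed strength.

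The missing idea --- and the engine of the paper's proof --- is that transversality can be upgraded in the critical bidegree. The restriction $\rest{\Omega}{\mathfrak a}$ is a transverse $(n-2,n-2)$-form on the $(n-1)$-dimensional space $\mathfrak a$, i.e.\ a form of top degree minus one, where transversality coincides with strong positivity; by Michelsohn's root theorem \cite{michelsohn} it is therefore $\sigma^{n-2}$ for a uniquely determined Hermitian metric $\sigma$ on $\mathfrak a$. This is what produces the metric your adjointness relation needs, and the $\sigma$-unitary basis of $\mathfrak a^{1,0}$ is the coframe in which \eqref{cseabid} eventually holds. The paper then sets $\omega=i\,\alpha^{1\bar1}+\sigma$, notes $d\omega^{n-2}=d\sigma^{n-2}$, and converts the single equation $d\Omega=0$ (contracted with $Z_1$ and restricted to $\mathfrak a$, where the terms involving $\eta$, $\theta$ drop out) into
$\rest{\pt{\omega^{n-3}\wedge\iota_{Z_1}d\omega}}{\mathfrak a}=0$;
injectivity of the Lefschetz operator then gives $\rest{\pt{\iota_{Z_1}d\omega}}{\mathfrak a}=\rest{\pt{\iota_{\bar Z_1}d\omega}}{\mathfrak a}=0$, i.e.\ the vanishing of the contractions $\iota_{Z_1}T=\iota_{\bar Z_1}T=0$ of the Chern torsion. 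These pointwise-linear torsion vanishings, fed into the component computations of \cite[Sect.~2--3]{GuoZheng}, simultaneously deliver $c=0$, $B=0$, the diagonalization, and $a_{jj}=-\overline{c_{jj}}$ --- in one stroke rather than coefficient by coefficient. If you want to salvage your route, replace the family of decomposable test forms by this root-extraction step: without it, no amount of sign testing will yield the normal form.
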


\begin{proof}
 {The strategy of the proof is the following.
First, we show how to associate to every transverse $(n-2,n-2)$-form $\Omega$ on $\mathfrak g$ a Hermitian metric $\omega$ on $\mathfrak g$.
We will therefore prove that the closure of $\Omega$ forces suitable components of the Chern torsion $T$ of $\omega$ to vanish.
This will be enough to prove the statement, using \cite[eq. (37)]{GuoZheng}, where the components of $T$ are written in terms of the complex structure constants.
}

    Let $\Omega$ be a $(n-2)$-K\"ahler form on $(\mathfrak g,J)$. 
    Then, its restriction $\Omega_\mathfrak a$ to $\mathfrak a$ is a  strongly positive $(n-2,n-2)$-form in complex dimension $n-1$, namely it is the $(n-2)$-th power of  a Hermitian metric $\sigma$ on $\mathfrak a$.
    We can fix a unitary basis $\pg{Z_2,\dots,Z_n}$ of $\mathfrak a^{1,0}$ and extend it to a basis $\pg{Z_1,\dots,Z_n}$ of $\mathfrak g^{1,0}$.
    In the dual basis $\pg{\alpha^j}_{j=1}^n$, we get that $d\psi\in\mathcal{I}(\alpha^1,\alpha^{\bar 1})$, for all $\psi\in\mathfrak g_\C^*$.
    So we have
    $$\Omega=\sigma^{n-2}+\alpha^1\wedge\eta+\alb1\wedge\bar\eta+i\al11\wedge\theta, $$
for some $\eta\in\Lambda^{n-3,n-2}_\mathfrak a$, $\theta\in\Lambda^{n-3,n-3}_\mathfrak a$, and 
\begin{equation}\label{domega}
d\Omega=d\sigma^{n-2}-\alpha^1\wedge d\eta-\alb1\wedge d\bar\eta,
\end{equation}
where $\alpha^1\wedge d\eta\in\mathcal{I}\pt{\al11}$. 
Consider $\omega=i\al11\wedge\sigma$, corresponding to a unitary metric $g$ on $\mathfrak g$.
Its $(n-2)$ power is $\omega^{n-2}=\sigma^{n-2}+i\al11\wedge\sigma^{n-3}$, so $d\omega^{n-2}=d\sigma^{n-2}$.
Now, since $\Omega$ is closed, using \eqref{domega} we find $0=\rest{\pt{\iota_{Z_1}d\Omega}}{\mathfrak a}=\rest{\pt{\iota_{Z_1}d\sigma^{n-2}}}{\mathfrak a}$, and 
\begin{equation*}
\begin{aligned}
\iota_{Z_1}d\sigma^{n-2}&=\iota_{Z_1}d\omega^{n-2}=\iota_{Z_1}\pt{(n-2)\,\omega^{n-3}\wedge d\omega}\\
&=(n-2)\pt{(n-3)\,\omega^{n-4}\wedge\iota_{Z_1}\omega\wedge d\omega+\omega^{n-3}\wedge \iota_{Z_1}d\omega}.
\end{aligned}\end{equation*}
Notice that, by construction, $\mathfrak a$ is orthogonal to $Z_1$ with respect to $g$, so in particular $\omega^{n-4}\wedge\iota_{Z_1}\omega\wedge d\omega$ vanishes when evaluated on $\mathfrak a$.
This gives
\begin{equation*}
\begin{aligned}
0=\rest{\pt{\iota_{Z_1}d\sigma^{n-2}}}{\mathfrak a}&=(n-2)\rest{\pt{(n-3)\,\omega^{n-4}\wedge\iota_{Z_1}\omega\wedge d\omega+\omega^{n-3}\wedge \iota_{Z_1}d\omega}}{\mathfrak a}\\
&=(n-2)\rest{\pt{\omega^{n-3}\wedge \iota_{Z_1}d\omega}}{\mathfrak a}\\
&=\sigma^{n-3}\wedge\rest{\pt{ \iota_{Z_1}d\omega}}{\mathfrak a},
\end{aligned}\end{equation*}
where the last equality holds true because
\begin{equation*}
    \omega^{n-3}\wedge \iota_{Z_1}d\omega=\pt{\sigma^{n-3}+(n-4)\,i\al11\wedge\sigma^{n-4}}\wedge\pt{\rest{\iota_{Z_1}d\omega}{\mathfrak a}+\al{}1\wedge\iota_{\bar{Z}_1}\iota_{Z_1}d\omega}.
\end{equation*}
By injectivity of the Lefschetz operator we can then conclude that $\rest{\pt{\iota_{Z_1}d\omega}}{\mathfrak a}=0$, and similarly one can prove that $\rest{\pt{\iota_{{\bar Z}_1}d\omega}}{\mathfrak a}=0$.
In terms of the Chern torsion $T$, this can be read as $\iota_{{ Z}_1}T=\iota_{{\bar Z}_1}T=0$.
To conclude, it is sufficient to retrace the computations of the components of $T$ with respect to the unitary basis $\pg{Z_1,\dots,Z_n}$, as in  \cite[eq. (37)]{GuoZheng}, to find that the complex structure equations in this basis are as in \eqref{cseabid}.
\end{proof}

\begin{remark}\label{abidk} 
    The Lie algebra  $(\mathfrak g, J)$ with complex structure equations \eqref{cseabid}, such that  
    \begin{equation*}   
    \begin{aligned}
&\lambda_j= v_j=0,&&\quad\text{for all } j=2,\dots,l,\\
        &\lambda_j\neq0,&&\quad\text{for all } j=l+1,\dots,n,
    \end{aligned}\end{equation*}
    with $l\ge2$, is K\"ahler. 
    Indeed, we already know that this is true if $v_j=0$, for all $j=2,\dots,n$ \cite[Prop. $3(iii)$]{GuoZheng}.
    If this is not the case, define 
    \begin{equation*}
        w_j=
            0,                      \quad    j=1,\dots,l,\quad
        w_j=
            \frac{v_j}{\lambda_j},          \quad j=l+1,\dots,n.
    \end{equation*}
    Unless $\mathfrak g$ is  abelian, the $w_j$ are not all zero, and the $(1,1)$-form
    \begin{equation*}
    i\sum_{j=1}^n\pt{2\abs{w_j}^2\alpha^{1\bar 1}+\alpha^{j\bar j}-w_j\,\alpha^{1\bar j}-\overline{w_j}\,\alpha^{j\bar 1}}
    \end{equation*}
    is positive definite and closed.
\end{remark}

\begin{theorem}\label{ThmAbIdeal}
    A unimodular Lie algebra of complex dimension $n\geq4$, admitting an abelian, $J$-invariant ideal $\mathfrak a$ of real codimension $2$, cannot be $(n-2)$-K\"ahler, unless K\"ahler.
\end{theorem}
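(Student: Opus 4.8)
The plan is to start from Proposition \ref{abidn-2k}: since $(\mathfrak g,J)$ is unimodular, $(n-2)$-K\"ahler and carries the $J$-invariant abelian ideal $\mathfrak a$ of real codimension $2$, its complex structure equations have the form \eqref{cseabid}, so the statement reduces to showing that $(n-2)$-K\"ahlerianity forces the configuration of Remark \ref{abidk}. Concretely, I would prove the single claim that $\lambda_j=0$ implies $v_j=0$; granting it, a permutation of the indices $2,\dots,n$ puts \eqref{cseabid} exactly into the shape of Remark \ref{abidk} (the block $\lambda_j=v_j=0$ followed by the block $\lambda_j\neq0$), and the explicit closed positive $(1,1)$-form produced there exhibits a K\"ahler metric. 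Along the way I would record the constraint coming from unimodularity, namely $\sum_{j=2}^{n}\lambda_j=0$, obtained from $\operatorname{tr}(\mathrm{ad}_{Z_1})=0$.

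The engine for the key claim is Proposition \ref{obstructpK} with $p=n-2$: if there is $\gamma\in\Lambda^3$ with $0\neq(d\gamma)^{2,2}=\eta\wedge\bar\eta$ for a decomposable $\eta\in\Lambda^{2,0}$, then $(\mathfrak g,J)$ is not $(n-2)$-K\"ahler. Assume $\lambda_j=0$ but $v_j\neq0$, so that $d\alpha^j=v_j\,\alpha^1\wedge\alpha^{\bar 1}$. A direct computation from \eqref{cseabid} gives $d(\alpha^k\wedge\alpha^{\bar k})=v_k\,\alpha^1\wedge\alpha^{\bar 1}\wedge\alpha^{\bar k}+\bar v_k\,\alpha^k\wedge\alpha^1\wedge\alpha^{\bar 1}$, the $\lambda_k$-terms cancelling, so $\alpha^k\wedge\alpha^{\bar k}$ is closed if and only if $v_k=0$. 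If some index $k\neq 1,j$ has $v_k=0$, I take $\gamma=\alpha^j\wedge\alpha^k\wedge\alpha^{\bar k}$, and then $(d\gamma)^{2,2}=v_j\,\alpha^1\wedge\alpha^{\bar1}\wedge\alpha^k\wedge\alpha^{\bar k}=-v_j\,(\alpha^1\wedge\alpha^k)\wedge\overline{(\alpha^1\wedge\alpha^k)}\neq0$, with $\eta=\alpha^1\wedge\alpha^k$ decomposable, a contradiction. Thus the claim holds as soon as a \emph{companion} index with $v_k=0$ is available.

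To manufacture companions I would normalize \eqref{cseabid} by changes of basis of $\mathfrak a^{1,0}$ mixing forms that share a common eigenvalue: if $\lambda_k=\lambda_{k'}$ and $v_k,v_{k'}\neq0$, replacing $\alpha^{k'}$ by $\alpha^{k'}-\tfrac{v_{k'}}{v_k}\alpha^{k}$ preserves the shape of \eqref{cseabid} (the eigenvalue is unchanged) and sets the new $v_{k'}=0$. Iterating, within each $\lambda$-block at most one coefficient $v$ survives; in particular, unless all the $\lambda_j$ are pairwise distinct and all the $v_j$ are nonzero, a companion index with $v_k=0$ exists and the previous paragraph excludes every putative bad index, yielding $\lambda_j=0\Rightarrow v_j=0$.

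The main obstacle is precisely the residual, fully non-degenerate configuration in which all $\lambda_j$ are distinct and all $v_j\neq0$, so that a bad index $j$ with $\lambda_j=0$ has no companion. This case genuinely escapes the elementary wedge obstruction: mixing eigenforms with different eigenvalues produces \emph{mixed} $(2,2)$-terms $(\alpha^1\wedge\alpha^a)\wedge\overline{(\alpha^1\wedge\alpha^b)}$ with $a\neq b$, which are not of the admissible form $\eta\wedge\bar\eta$. At the same time it is not K\"ahler (for instance its first Betti number equals $3$, since the closed $(1,0)$-forms reduce to $\alpha^1$ together with one combination involving $\alpha^j$), so it \emph{must} be excluded as non-$(n-2)$-K\"ahler for the theorem to hold. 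To close it I would return to the full closedness $d\Omega=0$, going beyond the torsion identities $\iota_{Z_1}T=\iota_{\bar Z_1}T=0$ used in Proposition \ref{abidn-2k}: extracting from \eqref{domega} the component along $\alpha^1\wedge\alpha^{\bar 1}\wedge(\text{pure }\mathfrak a)$ yields an extra relation tying the $v_j$ to the metric $\sigma$ on $\mathfrak a$, which together with $\sum_j\lambda_j=0$ forces $v_j=0$ whenever $\lambda_j=0$. Once the key claim is secured in all cases, Remark \ref{abidk} delivers the closed positive $(1,1)$-form and the proof concludes.
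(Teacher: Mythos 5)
Your overall route is the paper's: Proposition \ref{abidn-2k} plus Remark \ref{abidk} reduce the theorem to showing that no index can have $\lambda_k=0$ and $v_k\neq0$, and this is attacked with Proposition \ref{obstructpK} via explicit test $3$-forms. Your companion/normalization argument reproduces the paper's easy subcases: the paper rescales so that $d\alpha^l=i\alpha^{1\bar1}$ and, whenever a second closed $(1,0)$-form $\alpha^2$ is available, takes $\beta=2i\,\alpha^{2\bar2l}$ with $d\beta=\alpha^{12\bar1\bar2}$, which is exactly your $\gamma=\alpha^j\wedge\alpha^k\wedge\alpha^{\bar k}$ with a companion $k$ (note that you should rescale $\gamma$ by a complex constant, since your coefficient $-v_j$ need not be real, so $(d\gamma)^{2,2}$ is not literally of the form $c\,\eta\wedge\bar\eta$ with $c$ of a fixed sign as stated). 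The problem is that the residual case you isolate --- the bad index $\lambda_j=0$, $v_j\neq0$ with no companion, i.e.\ the paper's case $l=2$, $v_3\neq0$ --- is precisely where the theorem has content, and your proposal leaves it genuinely open: you only sketch a plan (``extract an extra relation from $d\Omega=0$'') without carrying it out.

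Moreover that fallback plan is unlikely to close the gap as described, because the unimodularity identity you record, $\sum_{j=2}^n\lambda_j=0$, is spurious: any Lie algebra with structure equations \eqref{cseabid} is automatically unimodular, since the $Z_j$-component of $[Z_1,Z_j]$ is $\bar\lambda_j$ while the $\bar Z_j$-component of $[Z_1,\bar Z_j]$ is $-\bar\lambda_j$, so $\operatorname{tr}\operatorname{ad}_{Z_1}=0$ identically and no constraint on the $\lambda_j$ results. (Your parenthetical $b_1=3$ is also off: in the residual case the only closed $(1,0)$-form is $\alpha^1$, though this remark is inessential.) The missing idea is that the mixed $(2,2)$-term you declare inadmissible can be \emph{cancelled by a correction term}, exactly because in the residual case every surviving index has $\lambda_k\neq0$: with the paper's normalization $d\alpha^2=i\alpha^{1\bar1}$ one computes $\partial\alpha^{3\bar2\bar3}=i\,\alpha^{13\bar1\bar3}+\bar v_3\,\alpha^{13\bar1\bar2}$ and $\partial\alpha^{3\bar1\bar2}=-\bar\lambda_3\,\alpha^{13\bar1\bar2}$, so the $3$-form $\beta=\alpha^{3\bar2\bar3}+\frac{\bar v_3}{\bar\lambda_3}\,\alpha^{3\bar1\bar2}$ satisfies $(d\beta)^{2,2}=\partial\beta=i\,\alpha^{13\bar1\bar3}$, a nonzero multiple of $\eta\wedge\bar\eta$ with $\eta=\alpha^{13}$ simple, and Proposition \ref{obstructpK} rules out the $(n-2)$-K\"ahler form. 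Without this (or some worked-out substitute), your key claim ``$\lambda_j=0\Rightarrow v_j=0$'' is not established and the proof is incomplete.
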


\begin{proof}
    In light of Proposition \ref{abidn-2k} and Remark \ref{abidk}, the only case to consider is that with structure equations \eqref{cseabid}, with  $\lambda_k=0$, $v_k\neq0$, for some $k=2,\dots,n$.
    Up to a rescaling and reordering of the basis, we can assume
        \begin{equation*}
    d\alpha^1=\dots=d\alpha^{l-1}=0, \quad
    d\alpha^l=i\al11.
    \end{equation*}
    If $l>2$, the $3$-form $\beta=2i\,\alpha^{2\bar 2l}$, with $d\beta=\aldue{12}{1}{2}$, gives an obstruction to the existence of $(n-2)$-K\"ahler form, by Proposition \ref{obstructpK}.
    On the other hand, if $l=2$,
    \begin{equation*}
        \pt{d\aldue323}^{2,2}=\del\alpha^{3\bar2\bar3}=i\aldue{13}13+\bar v_3\aldue{13}12.
    \end{equation*}
    Now, if $v_3=0$, we conclude as above.
    If instead $v_3\neq0$, we can assume $\lambda_3\neq0$ as well, otherwise, up to a change of basis, we fall back to the case $l>2$.
    The obstruction is then given by $\beta=\aldue323+\displaystyle\frac{\bar v_3}{\bar\lambda_3}\aldue312$, for  $\del\beta=i\aldue{13}13$.
\end{proof}

In light of Theorem \ref{theoremn-2}, this is an obstruction to the existence of \hr balanced structures, proving item \ref{abideal} in Theorem \ref{ThmIntro}.

\section{A non-compact example} \label{section.noncompact}

In this section we will show the existence of a Hodge-Riemann balanced structure on a non-compact {complex manifold of complex dimension $4$}, obtained as the product of the Iwasawa manifold by $\C$.

The Iwasawa manifold is the  complex parallelizable manifold ${\mathcal{I}}=\Gamma\backslash H$ obtained as the
quotient of the complex Heisenberg group $H$  defined as
\begin{equation*}
H\coloneqq \text{Heis}(3,\C)=\pg{
\begin{pmatrix}
    1   &   z_1 &   z_3 \\
    0   &   1   &   z_2 \\
    0   &   0   &   1
\end{pmatrix}, z_1,z_2,z_3\in\C
},
\end{equation*}
by the lattice $\Gamma\coloneqq\text{Heis}(3,\Z\pq i)$ of matrices in $\text{Heis}(3,\C)$ with Gauss integer coefficients.
This manifold is of particular interest as it  is one of the first examples of compact {complex} manifolds that are balanced and non-K\"ahler.
In fact, any invariant metric {compatible with the natural bi-invariant complex structure on $\mathcal I$} is balanced.
On $\mathcal I$ we can fix {complex} coordinates $\pg{z_1,z_2,z_3}$, such that a global holomorphic frame is given by 
\begin{equation*}
    \vartheta_1\coloneqq \frac{\partial}{\partial z_1},\quad
    \vartheta_2\coloneqq \frac{\partial}{\partial z_2}+z_1\frac{\partial}{\partial z_3},\quad
    \vartheta_3\coloneqq \frac{\partial}{\partial z_3},
\end{equation*}
with dual co-frame
\begin{equation*}
    \varphi^1\coloneqq dz_1,\quad
    \varphi^2\coloneqq dz_2,\quad
    \varphi^3\coloneqq dz_3-z_1dz_2.
\end{equation*}

{Let $u$  be the complex coordinate on $\C$}. We  study the product $M={{\mathcal I}\times \C}$, with holomorphic frame $\pg{\vartheta_1,\vartheta_2,\vartheta_3,\vartheta_4\coloneqq\frac{\partial}{\partial u}}$, and co-frame $\pg{\varphi^1,\varphi^2,\varphi^3,\varphi^4\coloneqq du}$ such that
\begin{equation*}
    d\varphi^{j}=0,\,j=1,2,4,\quad d\varphi^{3}=-\varphi^{12}.
\end{equation*}
By \cite[Thm. 4.1]{AlessandriniLieGroups}, {the complex manifold} $M$ admits $2$-K\"ahler structures, coming from any balanced structure on $\mathcal I$.
On the other hand, $M$ is not K\"ahler as, if it did admit a K\"ahler metric, the restriction of the latter to $\mathcal{I}$ would be a K\"ahler metric, giving a contradiction.

We will now give a different construction of a $2$-K\"ahler form on $M$, that is 
\begin{equation*}
\begin{aligned}    
    \Omega=&e^{\abs{u}^2}\varphi^{12\bar1\bar2}+\varphi^{13\bar1\bar3}+\pt{1+\abs{u}^2e^{\abs{u}^2}}\varphi^{14\bar1\bar4}+\varphi^{23\bar2\bar3}+\pt{1+\abs{u}^2e^{2\abs{u}^2}}\varphi^{24\bar2\bar4}\\
    &+e^{\abs{u}^2}\pt{1+\abs{u}^2}\varphi^{34\bar3\bar4}+u\,e^{\abs{u}^2}\varphi^{12\bar3\bar4}+\bar{u}\,e^{\abs{u}^2}\varphi^{34\bar1\bar2}.
\end{aligned}
\end{equation*}
For the sake of notation, we will denote $U=\abs{u}^2$.
One can easily see that $\Omega$ is closed and in fact not only $\Omega$ is transverse, but it is positive definite at every point.
To show the latter, we use Remark \ref{rmkPosDef}.
Consider the following $(2,0)$-forms on $M$
\begin{equation*}
    \Psi^1=e^U\pt{\varphi^{12}-\bar u\,\varphi^{34}},\,
    \Psi^2=\varphi^{13},\,
    \Psi^3=\varphi^{14},\,
    \Psi^4=\varphi^{23},\,
    \Psi^5=\varphi^{24} ,\,
    \Psi^6=\varphi^{34}.
\end{equation*}
At every point $x\in M$, these generate the space of $(2,0)$-forms on $T_xM$.
With respect to this basis, we see that $Q=Q_{\Omega}^{2,0}$ has a diagonal expression, as $Q(\Psi^j,\Psi^k)=0$, for $j\neq k$, and
\begin{equation*}
\begin{aligned}
        &Q(\Psi^1,\Psi^1)=e^{3U}     ,\\
    &Q(\Psi^2,\Psi^2)=Q(\Psi^4,\Psi^4)=1+e^{2U}U     ,\\
    &Q(\Psi^3,\Psi^3)=Q(\Psi^5,\Psi^5)=1     ,\\
   & Q(\Psi^6,\Psi^6)=e^U     .
\end{aligned}
\end{equation*}
By Remark \ref{rmkPosDef}, this means that $\Omega$ is positive definite, as wanted.
Here, the reference volume form is $\varphi^{1234\bar1\bar2\bar3\bar4}$, and the Hodge star operator in the definition of $Q$ is intended to be with respect to the metric $ i\pt{\varphi^{1\bar1}+\varphi^{2\bar2}+\varphi^{3\bar3}+\varphi^{4\bar4}}$.

\begin{lemma}\label{exHRform}
 $\Omega$ is (pointwise) a Hodge-Riemann form with respect to 
\begin{equation*}
    \omega_0\coloneqq i\pt{\varphi^{1\bar1}+\varphi^{2\bar2}+e^{-U}\varphi^{3\bar3}+\varphi^{4\bar4}},
\end{equation*}
for all degrees $(p,q)$, with $p+q=2$.
\end{lemma}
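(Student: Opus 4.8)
The plan is to verify the two defining conditions of Definition \ref{Hodge-Riemann form} pointwise, for each of the three degrees $(p,q)$ with $p+q=2$ in dimension $n=4$, i.e. $(2,0)$, $(1,1)$ and $(0,2)$. A preliminary observation streamlines everything: in $Q(\alpha,\beta)=i^{p-q}\pt{-1}^{\frac{k(k-1)}2}\star\pt{\alpha\wedge\bar\beta\wedge\Omega}$ the form $\alpha\wedge\bar\beta\wedge\Omega$ is of top degree, so $\star$ merely reads off its coefficient against the chosen volume form; changing the auxiliary metric only rescales $Q$ by a positive function, whence the sign of $Q$ on any subspace is metric-independent, and I may compute with the standard metric $i\pt{\varphi^{1\bar1}+\varphi^{2\bar2}+\varphi^{3\bar3}+\varphi^{4\bar4}}$ used above. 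For the degrees $(2,0)$ and $(0,2)$ the splitting condition is vacuous ($pq=0$) and primitivity is automatic, since $\alpha\wedge\omega_0\wedge\Omega\in\Lambda^{5,3}=0$ for $\alpha\in\Lambda^{2,0}$; hence $P^{2,0}=\Lambda^{2,0}$ and the condition reduces to $Q_\Omega^{2,0}$ being positive definite on all of $\Lambda^{2,0}$, which by Remark \ref{rmkPosDef} is exactly the positive definiteness of $\Omega$ already established. The degree $(0,2)$ case follows by conjugation, $\Omega$ being real.

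The substance is the degree $(1,1)$ case. First I would settle the splitting condition: as noted after Definition \ref{Hodge-Riemann form}, it suffices to check $\omega_0^2\wedge\Omega\neq0$. A short computation shows that $\omega_0^2\wedge\Omega$ is a sum, over index pairs, of products of the positive coefficients of $\omega_0^2$ with the complementary diagonal coefficients of $\Omega$ (the cross terms $u\,e^{U}\varphi^{12\bar3\bar4}$, $\bar u\,e^{U}\varphi^{34\bar1\bar2}$ not contributing), hence a positive multiple of the volume form. This yields the $Q$-orthogonal splitting $\Lambda^{1,1}=\C\,\omega_0\oplus P^{1,1}$ with $\dim P^{1,1}=15$, and, $\omega_0$ being real, also $Q(\omega_0,\omega_0)=-\star\pt{\omega_0^2\wedge\Omega}<0$.

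It then remains to show $Q=Q_\Omega^{1,1}$ is positive definite on $P^{1,1}$. The plan is to read off the matrix of $Q$ in the basis $\pg{\varphi^{j\bar k}}_{j,k=1}^4$. Since $Q(\varphi^{j\bar k},\varphi^{l\bar m})$ is nonzero only when the holomorphic indices $\pg{j,m}$ and the antiholomorphic indices $\pg{k,l}$ are complementary to those of some monomial of $\Omega$, this matrix block-diagonalizes: the four generators $\varphi^{j\bar j}$ span a block $D$ (coupled only among themselves, through the diagonal monomials of $\Omega$), while the twelve generators $\varphi^{j\bar k}$ with $j\neq k$ split into four self-paired $1\times1$ blocks ($\varphi^{1\bar2},\varphi^{2\bar1},\varphi^{3\bar4},\varphi^{4\bar3}$) and four $2\times2$ blocks in which the cross monomials couple pairs such as $\pg{\varphi^{3\bar1},\varphi^{2\bar4}}$ and $\pg{\varphi^{3\bar2},\varphi^{1\bar4}}$. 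Each off-diagonal generator is primitive, so all twelve lie in $P^{1,1}$; the $1\times1$ entries are positive coefficients of $\Omega$, and each $2\times2$ block has positive diagonal and determinant equal to $1$ — the $\abs u^2e^{2\abs u^2}$ terms in the coefficients of $\Omega$ being tuned precisely so that the product of diagonal entries cancels the squared coupling $\abs{u\,e^{U}}^2=U e^{2U}$. Hence $Q$ is positive definite on the span $\mathcal O$ of the off-diagonal generators. As $D\perp_Q\mathcal O$ and $P^{1,1}=\mathcal O\oplus\pt{P^{1,1}\cap D}$, one is left with the $3$-dimensional hyperplane $P^{1,1}\cap D$ cut out of $D$ by the single equation $\alpha\wedge\omega_0\wedge\Omega=0$; there I would verify that the restriction of $D$ is positive definite, equivalently that $D$ has signature $(3,1)$ with unique negative direction $\omega_0$. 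Combining gives $Q>0$ on $P^{1,1}$.

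The main obstacle is exactly this degree-$(1,1)$ bookkeeping: tracking correctly the signs from the conjugation $\overline{\varphi^{j\bar k}}=-\varphi^{k\bar j}$ and from reordering to the reference volume form, and confirming that the diagonal block $D$ has signature $(3,1)$ for all $U=\abs u^2\ge0$. The $2\times2$ off-diagonal blocks, by contrast, are immediate once the $e^{2U}$-coefficients are in place, since their determinants collapse to $1$.
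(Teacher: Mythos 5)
Your architecture coincides with the paper's own proof, just reorganized: the paper likewise reduces degrees $(2,0)$ and $(0,2)$ to the positive definiteness of $\Omega$ via Remark \ref{rmkPosDef} (so that only $Q'=Q^{1,1}_\Omega$ on $P^{1,1}$ needs checking), computes $\omega_0\wedge\Omega$ in \eqref{exom0wOm} to identify the primitivity equation, and then exploits exactly your block structure --- its basis elements $\Xi_5,\Xi_8,\Xi_9,\Xi_{10}$ are precisely the completed squares of your four $2\times2$ blocks, the remaining $\Xi_j$ with $j\ge4$ are your $1\times1$ blocks, and your diagonal block $D$ intersected with $P^{1,1}$ is the span of $\Xi_1,\Xi_2,\Xi_3$, on which $Q'$ has Gram matrix $3B$. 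Your reductions up to that point are sound, including the observation that primitivity is the same as $Q$-orthogonality to $\omega_0$, so that positive definiteness on $P^{1,1}\cap D$ is equivalent to $Q|_D$ having signature $(3,1)$ with negative direction $\omega_0$. However, there is a genuine gap: this last verification, which you only announce (``there I would verify\dots''), is the actual heart of the lemma. It is not routine bookkeeping --- it is the one place where the specific functions $e^U$, $e^U(1+U)$, $1+Ue^{2U}$ interact nontrivially, and in the paper it takes up most of the proof: Sylvester's criterion applied to $B$, the factorization $\det B=6\,e^{-U}(U+1)\pt{e^{2U}(3U+1)+3\,e^U+2}\pt{e^{2U}(3U-1)+4}$, and a monotonicity argument showing $f(U)=e^{2U}(3U-1)+4>0$ for all $U\ge0$, which is needed because $3U-1$ changes sign. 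Nothing in your setup makes this automatic, so as written the proposal does not prove the lemma.

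A second point deserves flagging rather than silence. Your claim that all four $2\times2$ blocks have determinant exactly $1$ is correct only if the coefficient of $\varphi^{14\bar1\bar4}$ in $\Omega$ is $1+\abs{u}^2e^{2\abs{u}^2}$; as printed it is $1+\abs{u}^2e^{\abs{u}^2}$, and with that value the blocks pairing $\pg{\varphi^{3\bar2},\varphi^{1\bar4}}$ and $\pg{\varphi^{4\bar1},\varphi^{2\bar3}}$ have determinant $1+Ue^{U}-Ue^{2U}$, which is negative already at $U=1$ --- the claimed cancellation, and indeed the lemma itself, would fail. This is evidently a typo in $\Omega$: the paper's display \eqref{exom0wOm} and its earlier values $Q(\Psi^2,\Psi^2)=Q(\Psi^4,\Psi^4)=1+Ue^{2U}$ are all computed with the coefficient $1+Ue^{2U}$, and closedness of $\Omega$ is insensitive to this coefficient (any function of $u,\bar u$ times $\varphi^{14\bar1\bar4}$ is closed, since $d$ of such a function lies in the span of $\varphi^4,\varphi^{\bar4}$). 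You silently used the corrected value; making that correction explicit, and then carrying out the positivity check on the diagonal block, is what separates your outline from a complete proof.
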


\begin{proof}
We only need to prove that $Q'=Q^{1,1}_\Omega$ is positive definite on 
\begin{equation*}
    P^{1,1}\coloneqq\pg{
    \alpha\in\Lambda^{1,1}\colon \alpha\wedge\omega_0\wedge\Omega=0
    }.
\end{equation*}
    A simple computation gives
    \begin{equation}\label{exom0wOm}
        \begin{aligned}
            \omega_0\wedge\Omega=\,&
        3i\,\varphi^{123\bar1\bar2\bar3}
        +i\,\pt{2U\,e^{2U}+e^U+2}\varphi^{124\bar1\bar2\bar4}  \\
        &\,+i\,\pt{(2U+1)e^U+1+e^{-U}}
        \pt{\varphi^{134\bar1\bar3\bar4}+\varphi^{234\bar2\bar3\bar4}}        ,
        \end{aligned}        
    \end{equation}
    so primitive forms are in coordinates $\alpha=\sum_{j,k=1}^4 a_{j\bar k}\,\varphi^{j\bar k}$ such that 
    \begin{equation*}
        3\,a_{4\bar4}=-\pt{(2U+1)e^U+1+e^{-U}}
        \pt{a_{1\bar1}+a_{2\bar2}}-\pt{2U\,e^{2U}+e^U+2}a_{3\bar3}.
    \end{equation*}
    Then, $ P^{1,1}$ is generated by
    \begin{equation*}
    \begin{aligned}
        &\Xi_1\coloneqq3\,\varphi^{1\bar1}-\pt{(2U+1)e^U+1+e^{-U}}\varphi^{4\bar4},   &&\Xi_6\coloneqq\varphi^{1\bar4},    &&\Xi_{11}\coloneqq\varphi^{2\bar4},    \\
        &\Xi_2\coloneqq3\,\varphi^{2\bar2}-\pt{(2U+1)e^U+1+e^{-U}}\varphi^{4\bar4},   &&\Xi_7\coloneqq\varphi^{2\bar1},    &&\Xi_{12}\coloneqq\varphi^{3\bar4},    \\
        &\Xi_3\coloneqq 3\,e^{-U}\varphi^{3\bar3}-\pt{2U\,e^{U}+1+2e^{-U}}\varphi^{4\bar4},     &&\Xi_8\coloneqq\varphi^{2\bar3}-\bar u\,e^U\varphi^{4\bar1},    &&\Xi_{13}\coloneqq\varphi^{4\bar1},    \\
        &\Xi_4\coloneqq\varphi^{1\bar2},                                            &&\Xi_9\coloneqq\varphi^{3\bar2}-u\,e^U\varphi^{1\bar4},    &&\Xi_{14}\coloneqq\varphi^{4\bar2},    \\
        &\Xi_5\coloneqq\varphi^{1\bar3}+\bar u\,e^U\varphi^{4\bar2},                &&\Xi_{10}\coloneqq\varphi^{3\bar1}+u\,e^U\varphi^{2\bar4},    &&\Xi_{15}\coloneqq\varphi^{4\bar3}.
    \end{aligned}\end{equation*}
    One can easily see that the matrix of the bilinear form $Q'$ associated to this basis is diagonal, with positive diagonal elements, apart from the upper left $3\times3$ block, that equals $3\,B$, with 
    \begin{equation*}
       B\coloneqq \begin{pmatrix}
            2\pt{e^U(2U+1)+1+e^{-U}}    &   {e^U(U-1)+2+2e^{-U}}         &   {e^U(U+1)+2} \\
            {e^U(U-1)+2+2e^{-U}}        &   2\pt{e^U(2U+1)+1+e^{-U}}     &   {e^U(U+1)+2} \\
            e^U(U+1)+2                  &   e^U(U+1)+2                   &   2\pt{2Ue^U+1+2\,e^{-U}}
        \end{pmatrix}.
    \end{equation*}
    By Sylvester's criterion, it is clear that the matrix is positive definite, because 
    \begin{equation*}
        \det B_{3,3}=3(U+1)\pt{e^{2U}(5U+1)+4e^U+4},
    \end{equation*}
    where $B_{3,3}$ is the principal block of $B$ obtained removing the third line and the third column, and
    \begin{equation*}\begin{aligned}
       \det B&= 6\,e^{-U}(U+1)\pt{e^{4U}(9U^2-1)+e^{3U}(9U-3)+2\,e^{2U}(9U+1)+12\,e^U+8}    \\
        &=6\,e^{-U}(U+1)\pt{e^{2U}(3U+1)+3\,e^U+2}\pt{e^{2U}(3U-1)+4},
    \end{aligned}
    \end{equation*}
    positive if and only if $f(U)=e^{2U}(3U-1)+4$ is positive, for all $U\in\R_{\ge0}$.
    This is a straightforward computation, as $f(0)=3>0$, and 
    \begin{equation*}
        f'(U)=e^{2U}(6U+1)>0,
    \end{equation*}
    for all $U\ge0$.
\end{proof}

Being $\Omega$ positive definite and $\omega_0$ strongly positive, $\omega_0\wedge\Omega$ is strongly positive, as one can also check by the explicit formula \eqref{exom0wOm}, and hence defines a Hermitian metric with fundamental form $F$ defined by 
\begin{equation*}
    \frac{F^3}{3!}=\omega_0\wedge\Omega.
\end{equation*}

\begin{theorem}
    The triple $(F,\omega_0,\Omega)$ is a Hodge-Riemann balanced structure on {the complex manifold $M= {\mathcal I} \times \C$}.
\end{theorem}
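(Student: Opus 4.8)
The plan is to verify the three defining conditions of a Hodge-Riemann balanced structure from Definition~\ref{HRtype}, taking $\omega=\omega_0$, and to observe that the bulk of the work is already in place. Condition (a) holds by the paragraph preceding this theorem: $\omega_0\wedge\Omega$ is strongly positive, hence equals $F^3/3!$ for a unique Hermitian metric $F$ (with $n=4$, so that $\tfrac{F^{n-1}}{(n-1)!}=\omega_0\wedge\Omega$). Condition (b), that $\Omega$ is pointwise a Hodge-Riemann form with respect to $\omega_0$ for all $(p,q)$ with $p+q=2$, is exactly the content of Lemma~\ref{exHRform} together with the positive-definiteness of $\Omega$ established before it. Thus the only statement left to prove is condition (c): that $\Omega$ is closed---already checked when $\Omega$ was introduced---and that $\omega_0\wedge\Omega$ is closed, i.e.\ that $F$ is balanced.

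So the single remaining task is to show $d(\omega_0\wedge\Omega)=0$, which I would carry out directly from the explicit expression \eqref{exom0wOm}. The two facts driving the computation are that on $M$ the only non-closed coframe elements are $\varphi^3,\bar\varphi^3$, with $d\varphi^3=-\varphi^{12}$ and $d\bar\varphi^3=-\bar\varphi^{12}$, and that every coefficient in \eqref{exom0wOm} is a smooth function of $U=\abs{u}^2$, whose differential $dU=\bar u\,\varphi^4+u\,\bar\varphi^4$ only introduces the factors $\varphi^4$ and $\bar\varphi^4$.

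I would then inspect each of the four monomials in \eqref{exom0wOm} in turn. In $\varphi^{123\bar1\bar2\bar3}$ the coefficient is constant, and differentiating the factor $\varphi^3$ (respectively $\bar\varphi^3$) reproduces $\varphi^{12}$ (respectively $\bar\varphi^{12}$), which already appears in the monomial, so the contribution vanishes by repetition of indices. In $\varphi^{134\bar1\bar3\bar4}$ and $\varphi^{234\bar2\bar3\bar4}$ the same cancellation kills the $d\varphi^3$ and $d\bar\varphi^3$ terms, while differentiating the coefficient contributes $\varphi^4$ or $\bar\varphi^4$, both already present, so these die as well. Finally $\varphi^{124\bar1\bar2\bar4}$ contains neither $\varphi^3$ nor $\bar\varphi^3$, so only the coefficient differential survives, and it again produces the already-present $\varphi^4$ or $\bar\varphi^4$. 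Every term therefore vanishes, giving $d(\omega_0\wedge\Omega)=0$, which completes condition (c) and hence the theorem.

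The computation is entirely mechanical; the only thing to be careful about is the bookkeeping of which wedge terms survive, and the repeated-index cancellations make even that short. In this sense there is no serious obstacle: the real work has been front-loaded into the positive-definiteness verification of $\Omega$ and the Sylvester-criterion estimate inside Lemma~\ref{exHRform}, and the balancedness that remains follows from the favorable structure of the coframe.
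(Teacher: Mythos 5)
Your proposal is correct and follows essentially the same route as the paper: conditions (a) and (b) are discharged by the strong positivity of $\omega_0\wedge\Omega$ and Lemma~\ref{exHRform}, and the proof reduces to the balancedness of $F$, verified by a direct computation with the structure equations. The only (cosmetic) difference is that you differentiate the explicit expression \eqref{exom0wOm} term by term, whereas the paper first uses $d\Omega=0$ and the Leibniz rule to reduce to $\pt{\del\omega_0}\wedge\Omega=0$ and then computes $\partial\pt{e^{-U}\varphi^{3\bar3}}$; both computations hinge on the same repeated-index cancellations and on $dU$ producing only $\varphi^4,\bar\varphi^4$ factors.
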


\begin{proof}
    We already saw that $\Omega$ is closed, so   by construction of $F$, and    in light of Lemma \ref{exHRform}, we only have to show that $F$ is balanced, or equivalently that
    \begin{equation}\label{exeqclosed}
        0=\del\,\pt{\omega_0\wedge\Omega}=\pt{\del\omega_0}\wedge\Omega.
    \end{equation}
   It is evident that $\varphi^{ j\bar j}$ is closed, for $j=1,2,4$, and 
    \begin{equation*}
        \partial\pt{e^{-U}\varphi^{3\bar3}} =e^{-U}\pt{\bar u\,\varphi^{34\bar3}-\varphi^{12\bar3}},
    \end{equation*}
    so \eqref{exeqclosed} follows readily, completing the proof.
\end{proof}

 {
\begin{remark}
    This example suggests broader existence phenomena of non-invariant Hodge-Riemann balanced structure in the setting of  non-compact locally homogeneous manifolds.
\end{remark}
}

\bigskip

{\bf Acknowledgements.} 
The authors would like to thank Mario Garcia-Fernandez and Gueo Grantcharov for valuable discussions regarding the subject, and Riccardo Piovani for useful comments.
The authors would also like to thank the anonymous referees for useful comments.
The second author is also very grateful to Adela Latorre for insightful conversations, and the cheerful hospitality at Departamento de Matemática Aplicada, at Universidad Politécnica de Madrid.
The authors are partially supported by Project PRIN 2022 \lq \lq Geometry and Holomorphic Dynamics” and by GNSAGA (Indam). Anna Fino   is also supported  by a grant from the Simons Foundation (\#944448). 
Asia Mainenti is partly supported by the PNRR-III-C9-2023-I8 grant CF 149/31.07.2023 {\em Conformal Aspects of Geometry and Dynamics}.

\end{document}